\newcommand\ignore[1]{}
\numberwithin{equation}{section}
\numberwithin{equation}{subsection} 
\newtheorem{theorem}{Theorem}[section]
\newtheorem{lemma}[theorem]{Lemma}
\newtheorem{remark}[theorem]{Remark}
\begin{document} 
\baselineskip=15.5pt

\title
[Irreducible unitary representations with non-zero $(\frak{g}, K)$-cohomology]{Irreducible unitary representations with non-zero relative Lie algebra cohomology of the Lie group $SO_0(2,m)$}  
\author{Ankita Pal, Pampa Paul }
\address{Department of Mathematics, Presidency University, 86/1 College Street, Kolkata 700073, India}
\email{ankita.maths1995@gmail.com \\ pampa.maths@presiuniv.ac.in }
\subjclass[2020]{22E46, 17B10, 17B20, 17B22, 17B56.  \\ 
Keywords and phrases: Lie group, Lie algebra, Dynkin diagram, $\theta$-stable parabolic subalgebra, cohomological induction.}

\thispagestyle{empty}
\date{}

\begin{abstract}
By a theorem of D. Wigner, an irreducible unitary representation with non-zero $(\frak{g},K)$-cohomology has trivial infinitesimal character, and hence up to unitary equivalence, 
these are finite in number. We have determined the number of equivalence classes of these representations and the Poincar\'{e} polynomial of cohomologies of these 
representations for the Lie group $SO_0(2,m)$ for any positive integer $m.$ We have also determined, among these, which are discrete series representations and holomorphic 
discrete series representations. 

\end{abstract}
\maketitle

\noindent 
\section{Introduction} 

Let $G$ be a connected semisimple Lie group with finite centre, and $K$ be a maximal compact subgroup of $G$ with Cartan involution $\theta.$ 
The differential of $\theta$ at identity is denoted by the same notation $\theta.$
Let $\frak{g}_0$ be the Lie algebra of $G, \frak{k}_0$ be the subalgebra of $\frak{g}_0$ corresponding to the Lie subgroup $K$ of $G, 
\frak{h}_0$ be a $\theta$-stable fundamental Cartan subalgebra of $\frak{g}_0,$ and $\frak{g}=\frak{g}_0^\mathbb{C}, \frak{h}=\frak{h}_0^\mathbb{C}.$ 
Corresponding to a $\theta$-stable parabolic subalgebra $\frak{q}$ of $\frak{g}_0$ containing $\frak{h}_0,$ and a linear function $\lambda$ on $\frak{h}$ in a certain good range, there is a 
cohomologically induced module $A_\frak{q}(\lambda),$ which is an irreducible unitary representation of $G$ with infinitesimal character $\chi_\lambda.$ 
These representations include all discrete series representations if rank$(G)=$rank$(K).$ We are 
interested in those cohomologically induced module $A_\frak{q}(\lambda)$ for which the infinitesimal character $\chi_\lambda$ is trivial, that is $\chi_\lambda$ is the infinitesimal character of the 
trivial representation of $G$, and we denote it by $A_\frak{q}.$ 

 By a theorem of D. Wigner, an irreducible representation with non-zero $(\frak{g},K)$-cohomology has trivial infinitesimal character. 
Hence there are only finitely many irreducible unitary representations with non-zero $(\frak{g}, K)$-cohomology. 
In fact, the irreducible unitary representations with non-zero relative Lie algebra cohomology are exactly the irreducible unitary representations $A_\frak{q}.$ See \S \ref{general} for more 
details. So the representations $A_\frak{q}$ are important on its own. Apart from that Borel \cite{borelc} has conjectured that an irreducible unitary representation with 
non-zero $(\frak{g},K)$-cohomology is an automorphic representation for a suitable uniform discrete subgroup of $G.$ Millson and Raghunathan \cite{mira} have proved this conjecture 
for the group $G=SO(n,1),$ by constructing geometric cycles and using Matsushima's isomophism \cite{matsushima}. So the representations $A_\frak{q}$ are possible candidates of automorphic representations of $G.$ 

Collingwood \cite{collingwood} has determined the representations $A_\frak{q}$ and computed cohomologies of these representations of $Sp(n,1),$ and the real rank one real form of $F_4.$ 
Li and Schwermer \cite{lisch} have determined the representations $A_\frak{q}$ and cohomologies of these representations for the connected non-compact real Lie group of 
type $G_2.$ Mondal and Sankaran \cite{mondal-sankaran2} have determined certain representations $A_\frak{q}$ of Hodge type $(p,p)$ when $G/K$ is an irreducible Hermitian symmetric space. 
If $G$ is a complex simple Lie group, the number of equivalence classes of the representations $A_\frak{q},$ and Poincar\'{e} polynomials of cohomologies of some of these representations have been 
determined in \cite{paul}. In this article, we have determined the number of equivalence classes of the representations $A_\frak{q},$ and Poincar\'{e} polynomials of cohomologies of these representations,  
when $G=SO_0(2,m)$ for any positive integer $m.$ The main results are stated as follows: 

\begin{theorem}\label{th1}
(i) If $A$ is the number of equivalence classes of irreducible unitary representations with non-zero $(\frak{g}, K)$-cohomology of the Lie group $SO_0(2,m)(m \in \mathbb{N})$, then  
\[ A = 
\begin{cases}
l(l+2)  & \textrm{if } m=2l-1, \\
l^2+4l-3 & \textrm{if } m =2l-2. \\
\end{cases}
\] \\ 
(ii) An $A_\frak{q}$ is unitarily equivalent to a discrete series representation of $SO_0(2,m)$ with trivial infinitesimal character {\it if and only if} 
$(-\Delta(\frak{u} \cap \frak{p}_-)) \cup \Delta(\frak{u} \cap \frak{p}_+) = \Delta_n^+.$ Also if $m \neq 2,$
an $A_\frak{q}$ is unitarily equivalent to a holomorphic discrete series representation of $SO_0(2,m)$ with trivial infinitesimal character {\it if and only if} 
$\Delta(\frak{u} \cap \frak{p}_+)=\phi \textrm{ or } \Delta_n^+.$ If $D$(respectively, $D_h$) is the number of equivalence classes of discrete series representations 
(respectively, holomorphic discrete series representations) of $SO_0(2,m)$ with trivial infinitesimal character, then 
$D = 2l$ if $m=2l-1,$ or $2l-2.$ Also $D_h=2$ if $m \neq 2.$ If $m=2,$ then $D_h=4.$ 
\end{theorem}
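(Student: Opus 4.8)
The plan is to reduce both parts to a combinatorial count over the noncompact roots and then to carry that count out according to the parity of $m$. Since $K=SO(2)\times SO(m)$ gives $\operatorname{rank}G=\operatorname{rank}K$, I would fix a compact Cartan subalgebra $\frak{t}\subseteq\frak{k}$ and realize $\frak{g}=\frak{so}(m+2,\mathbb{C})$ as type $B_l$ when $m=2l-1$ and as type $D_l$ when $m=2l-2$. In coordinates $e_1,\dots,e_l$, with $e_1$ the $SO(2)$-direction, the noncompact positive roots are $\Delta_n^+=\{e_1\pm e_j:2\le j\le l\}$, together with $e_1$ when $m$ is odd; these span $\frak{p}_+$, while $W_K$ is the Weyl group of $SO(m)$, acting on $e_2,\dots,e_l$ by permutations and by all (respectively, only even) sign changes when $m$ is odd (respectively, even). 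Every $A_\frak{q}$ comes from a $\theta$-stable parabolic $\frak{q}=\frak{l}\oplus\frak{u}\supseteq\frak{t}$, and by the Vogan--Zuckerman classification recalled in \S\ref{general} one has $A_\frak{q}\cong A_{\frak{q}'}$ precisely when $\frak{u}\cap\frak{p}$ and $\frak{u}'\cap\frak{p}$ are $K$-conjugate, equivalently when $\Delta(\frak{u}\cap\frak{p})$ and $\Delta(\frak{u}'\cap\frak{p})$ lie in a single $W_K$-orbit. Hence computing $A$ in part (i) amounts to counting $W_K$-orbits of the sets $\Delta(\frak{u}\cap\frak{p})$.

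Each $\frak{q}$ is cut out by some $x\in i\frak{t}_0$, and $\Delta(\frak{u}\cap\frak{p})$ is encoded by the disjoint pair $(S_+,S_-)$ of subsets of $\Delta_n^+$ on which $x$ is positive, respectively negative, the complementary roots lying in $\frak{l}$; concretely this is read off from the signs of $x_1\pm x_j$ and, for $m$ odd, of $x_1$. Using $W_K$ I would normalize $x_2\ge\cdots\ge x_l\ge 0$ (with the caveat, in the type $D$ case, that only the sign of $x_l$ may be adjusted), after which the admissible patterns are exactly those realizable by such an ordered $x$, and the count reduces to recording where $x_1$ and $-x_1$ fall among the ordered $x_j$, that is, to a short list of order types. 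Summing the contributions and separating the two parities should produce $l(l+2)$ and $l^2+4l-3$. Getting the realizability and the $W_K$-normalization exactly right, in particular the even-versus-odd sign-change rule and the boundary cases $x_1=0$ and $x_j=\pm x_1$, is where the genuine work lies and is the main obstacle; I would also confirm that distinct $W_K$-orbits of $\Delta(\frak{u}\cap\frak{p})$ yield inequivalent representations by comparing the lowest $K$-types $2\rho(\frak{u}\cap\frak{p})=\sum_{\alpha\in\Delta(\frak{u}\cap\frak{p})}\alpha$. As consistency checks I would verify $l=1$ (so $m=1$ and $G$ locally $SL(2,\mathbb{R})$, giving $A=3$) and $m=2$ (where $\frak{g}_0\cong\frak{sl}(2,\mathbb{R})\oplus\frak{sl}(2,\mathbb{R})$, giving $A=9=3^2$).

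For part (ii), I would use that $A_\frak{q}$ is a discrete series exactly when $L$ is compact, i.e. $\frak{l}\cap\frak{p}=0$; since $\frak{t}\subseteq\frak{k}$ forces $\frak{h}\cap\frak{p}=0$, this occurs iff no noncompact root vanishes on $x$, that is iff $(-\Delta(\frak{u}\cap\frak{p}_-))\cup\Delta(\frak{u}\cap\frak{p}_+)=S_-\cup S_+=\Delta_n^+$, which is the stated criterion. The corresponding $A_\frak{q}(0)$ is then the discrete series of Harish--Chandra parameter $\rho$, so these exhaust the discrete series with trivial infinitesimal character; restricting the enumeration above to the patterns with $S_+\cup S_-=\Delta_n^+$ gives $D=2l$ for both parities, in agreement with the coset count $|W_K\backslash W_G|=2l$.

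Among these discrete series the holomorphic ones are precisely those whose lowest $K$-type comes from a single stream of noncompact roots, namely $\frak{u}\cap\frak{p}=\frak{p}_+$ or $\frak{p}_-$, equivalently $\Delta(\frak{u}\cap\frak{p}_+)=\emptyset$ or $\Delta_n^+$; this yields the two (anti)holomorphic discrete series, so $D_h=2$ when $m\neq2$. The case $m=2$ I would treat separately, because $\frak{so}(2,2)\cong\frak{sl}(2,\mathbb{R})\oplus\frak{sl}(2,\mathbb{R})$ is not simple (type $D_2=A_1\times A_1$) and $G/K$ is a product of two discs, so all four combinations of (anti)holomorphic discrete series on the factors occur and $D_h=4$; this same reducibility is why the holomorphic criterion is stated only for $m\neq2$.
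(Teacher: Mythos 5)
Your reduction for part (i) --- pass to a $K$-conjugate of $\frak{q}$ containing the fixed Borel subalgebra of $\frak{k}$, so that $x$ is $\Delta_\frak{k}^+$-dominant, and classify by the sets $\Delta(\frak{u}\cap\frak{p})$ --- is correct, and it is in substance the same reduction the paper makes at the start of \S\ref{general} and \S\ref{proof}; your coordinates just rephrase the paper's poset picture of $\Delta_n^+$. The gap is that everything after this reduction, which is the actual content of part (i), is not carried out: you state only that the order-type count ``should produce'' $l(l+2)$ and $l^2+4l-3$, and you yourself place ``the genuine work'' precisely in the realizability and boundary analysis that you skip. This is not a harmless deferral, because the bookkeeping you propose --- order types of $x_1,-x_1$ against $x_2\ge\cdots\ge x_l$ --- is \emph{not} in bijection with equivalence classes, and the defect is tied to the parity of $m$. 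The class of $A_{\frak{q}_x}$ is determined by the sign function $\beta\mapsto\mathrm{sign}(\beta(x))$ on $\Delta_n^+$, which records $\mathrm{sign}(x_1)$ exactly when $m$ is odd (through the noncompact root $e_1$) and does not when $m$ is even (type $D$ has no root $e_1$). Concretely, for $m=2l-2$ the dominant elements $x=(1,2,\dots,2)$, $x'=(0,1,\dots,1)$, $x''=(-1,2,\dots,2)$ have three distinct order types but give literally the same set $\Delta(\frak{u}\cap\frak{p})=\{e_1+e_j,\ e_j-e_1 :\ 2\le j\le l\}$, hence one class, not three; the same collapse occurs for the mirror pattern with $x_l<0$. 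So an order-type count that tracks $\mathrm{sign}(x_1)$ returns $l^2+4l+1$ instead of $l^2+4l-3$. If instead you drop $\mathrm{sign}(x_1)$ from the bookkeeping, the even count comes out right but the odd count is short by $2$, because for $m=2l-1$ those same three elements lie in three distinct classes, separated exactly by $\mathrm{sign}(e_1(x))$. Either uniform convention therefore miscounts one parity; detecting and repairing this asymmetry (together with the realizability exclusions, e.g.\ that $\Delta(\frak{u}\cap\frak{p}_+)=\{\beta\in\Delta_n^+:\beta\ge e_1+e_i\}$ with $\Delta(\frak{u}\cap\frak{p}_-)=\emptyset$ cannot occur) is the substance of the paper's case-by-case argument with Lemma \ref{lemma}, and your proposal contains no mechanism for it.

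Part (ii) is in better shape and is genuinely different from the paper's treatment: you derive the discrete-series criterion from ``$A_\frak{q}$ is a discrete series iff $L$ is compact'' and get $D=2l$ from the coset count $|W_K\backslash W_G|=2l$, whereas the paper proves the nontrivial implication by explicitly building a $\Delta$-regular, $\Delta_\frak{k}^+$-dominant $\lambda'$ with the same sign pattern as $\lambda$ (this construction occupies most of \S\ref{proof}) and then lists the $2l$ full patterns by hand. Your route is shorter and the Weyl-coset count is a nice cross-check, but the fact you invoke is not among those recalled in \S\ref{general}, and at least its ``if'' direction needs an argument: it does follow quickly from the equivalence of \cite{riba} quoted there --- if $\frak{l}\cap\frak{p}=0$, then any Borel subalgebra $\frak{b}$ with $\frak{h}\subset\frak{b}\subset\frak{q}$ containing a Borel subalgebra of $\frak{l}$ satisfies $\frak{u}_\frak{b}\cap\frak{p}=\frak{u}\cap\frak{p}$, whence $A_\frak{q}\cong A_\frak{b}$ is a discrete series --- but that step has to be said, since producing such an equivalence is exactly what the paper labors over. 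Your holomorphic count ($\frak{u}\cap\frak{p}=\frak{p}_+$ or $\frak{p}_-$, i.e.\ the two Borel--de Siebenthal systems containing $\Delta_\frak{k}^+$ when $\frak{g}$ is simple) and the separate $m=2$ discussion agree with the paper and are fine.
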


We have also determined Poincar\'{e} polynomials of cohomologies of these representations in the Table \ref{b-table}, Table \ref{d-table}. The proof of Th.\ref{th1} is given in 
\S \ref{proof}. We have used Remark 3.3(iii) of \cite{mondal-sankaran2} to prove Th.\ref{th1}(i), and an alternative approach of $\theta$-stable parabolic subalgebras to 
determine the Poincar\'{e} polynomial of cohomologies of these representations.

\noindent
\section{Irreducible unitary representations with non-zero $(\frak{g}, K)$-cohomology}\label{general}

Let $G$ be a connected semisimple Lie group with finite centre, and $K$ be a maximal compact subgroup of $G$ with Cartan involution $\theta.$ The differential of $\theta$ at identity is denoted 
by the same notation $\theta.$ Let $\frak{g}_0=$Lie$(G),\frak{k}_0=$Lie$(K),$ and $\frak{g}_0=\frak{k}_0 \oplus \frak{p}_0$ be the Cartan decomposition corresponding to $\theta.$ Let 
$\frak{g}=\frak{g}_0^\mathbb{C},\frak{k}=\frak{k}_0^\mathbb{C} \subset \frak{g}, \frak{p}=\frak{p}_0^\mathbb{C} \subset \frak{g}.$ 

A $\theta$-stable parabolic subalgebra of $\frak{g}_0$ is a parabolic subalgebra $\frak{q}$ of $\frak{g}$ such that 
(a) $\theta(\frak{q}) = \frak{q}$, and (b) $\bar{\frak{q}} \cap \frak{q}= \frak{l}$ is a Levi subalgebra of $\frak{q}$; 
where $\bar{\ }$ denotes the conjugation of $\frak{g}$ with respect to $\frak{g}_0$. By (b), $\frak{l}$ is the 
complexification of a real subalgebra $\frak{l}_0$ of $\frak{g}_0$. Also $\theta(\frak{l}_0) = \frak{l}_0$ and and $\frak{l}_0$ contains a maximal 
abelian subalgebra $\frak{t}_0$ of $\frak{k}_0$. Then $\frak{h}_0 = \frak{z}_{\frak{g}_0} (\frak{t}_0)$ is a $\theta$-stable Cartan 
subalgebra of $\frak{g}_0.$ Let $\frak{t}=\frak{t}_0^\mathbb{C} \subset \frak{k},$ and $\frak{h}=\frak{h}_0^\mathbb{C} \subset \frak{g}.$ 
Note that $\frak{t},\frak{h}$ are Cartan subalgebras of $\frak{k},\frak{g}$ respectively and $\frak{h} \subset \frak{q}.$ Let $\frak{u}$ be the nilradical of $\frak{q}$ so that $\frak{q} = \frak{l} \oplus \frak{u}.$ Then 
$\frak{u}$ is $\theta$-stable and so $\frak{u} = (\frak{u} \cap \frak{k}) \oplus (\frak{u} \cap \frak{p}).$ 

 If $V$ is a finite dimensional complex $A$-module, where $A$ is an abelian Lie algebra; we denote by $\Delta (V)$( or by $\Delta (V , A)$), 
the set of all non-zero weights of $V;$ by $V^\alpha,$ the weight space of $V$ corresponding to a weight $\alpha \in \Delta(V);$ and 
by $\delta (V)$ (or by $\delta (V , A)$), $1/2$ of the sum of elements in $\Delta (V)$ counted 
with their respective multiplicities. Fix a maximal abelian subspace $\frak{t}_0$ of $\frak{k}_0$ and $\frak{t}=\frak{t}_0^\mathbb{C} \subset \frak{k}.$ 
Since $\frak{k},\frak{g}$ are  $\frak{t}$-modules (under the adjoint action), we have 
\[\frak{k}=\frak{t}\oplus \sum_{\alpha \in \Delta(\frak{k},\frak{t})} \frak{k}^\alpha, \textrm{and } 
 \frak{g}=\frak{h}\oplus \sum_{\alpha \in \Delta(\frak{g},\frak{t})} \frak{g}^\alpha.\]
Note that $\Delta(\frak{k},\frak{t})$ is actually the set of all non-zero roots of $\frak{k}$ relative to the Cartan subalgebra $\frak{t}.$ 
Choose a system of positive roots $\Delta_\frak{k}^+$ in $\Delta(\frak{k},\frak{t}).$
 If $x \in i\frak{t}_0$ be such that $\alpha(x) \ge 0$ for all $\alpha \in \Delta_\frak{k}^+,$ then 
$\frak{q}_x= \frak{h}\oplus \sum_{\alpha \in \Delta(\frak{g},\frak{t}),\alpha(x) \ge 0} \frak{g}^\alpha$ 
is a $\theta$-stable parabolic subalgebra of $\frak{g}_0$, $\frak{l}_x= \frak{h}\oplus \sum_{\alpha \in \Delta(\frak{g},\frak{t}),\alpha(x)= 0} \frak{g}^\alpha$ is the 
Levi subalgebra of $\frak{q}_x,$ and $\frak{u}_x= \sum_{\alpha \in \Delta(\frak{g},\frak{t}),\alpha(x) > 0} \frak{g}^\alpha$ is the nilradical of $\frak{q}_x.$ If $\frak{q}$ is a 
$\theta$-stable parabolic subalgebra of $\frak{g}_0$, there exists $k \in K$ such that $Ad(k)(\frak{q})=\frak{q}_x.$ 

 Now associated with a $\theta$-stable parabolic subalgebra $\frak{q}$, we have an irreducible unitary representation 
$\mathcal{R}^S _\frak{q} (\mathbb{C}) = A_\frak{q}$ of $G$ with trivial infinitesimal character, where $S = \textrm{dim}
(\frak{u} \cap \frak{k}).$ The associated $(\frak{g}, K)$-module $A_{\frak{q}, K}$ contains an 
irreducible $K$-submodule $V$ of highest weight (with respect to $\Delta ^+ _\frak{k}$) 
$2 \delta (\frak{u} \cap \frak{p}, \frak{t}) = 
\sum_{\beta \in \Delta (\frak{u} \cap \frak{p}, \frak{t})} \beta $ and 
it occurs with multiplicity one in $A_{\frak{q}, K}$. Any other irreducible $K$-module that occurs in $A_{\frak{q}, K}$ has highest weight 
of the form $2 \delta (\frak{u} \cap \frak{p}, \frak{t}) + 
\sum_{\gamma \in \Delta (\frak{u} \cap \frak{p}, \frak{t})} n_\gamma \gamma,$
with $n_\gamma$ a non-negative integer \cite[Th. 2.5]{voganz}.  
The $(\frak{g}, K)$-modules $A_{\frak{q} , K}$ were first constructed, in general, by Parthasarathy \cite{parthasarathy1}. 
Vogan and Zuckerman \cite{voganz} gave a construction of the $(\frak{g}, K)$-modules $A_{\frak{q} , K}$ via cohomological induction and 
Vogan \cite{vogan} proved that these are unitarizable. Define an equivalence relation on the set of all $\theta$-stable parabolic subalgebras of $\frak{g}_0,$ by $\frak{q}$ is equivalent 
to $\frak{q}'$ if either $Ad(k)(\frak{q})=\frak{q}',$ for some $k \in K,$ or $\frak{u} \cap \frak{p} = \frak{u}' \cap \frak{p}.$ Also unitary equivalence is an equivalence relation 
on the set of all irreducible unitary representations $A_\frak{q}.$ Then the set of all equivalence classes of $\theta$-stable parabolic subalgebras are in one-one correspondence with 
the set of all equivalence classes of the irreducible unitary representations $A_\frak{q}$ \cite[Prop. 4.5]{riba}. 

 If $\frak{q}$ is a $\theta$-stable parabolic subalgebra of $\frak{g}$, then the Levi subgroup $L = \{g \in G : \textrm{Ad}(g) (\frak{q}) = 
\frak{q} \}$ is a connected reductive Lie subgroup of $G$ with Lie algebra $\frak{l}_0.$ As $\theta(\frak{l}_0) = \frak{l}_0, L \cap K$ is a maximal 
compact subgroup of $L$. One has 
\[ H^r (\frak{g}, K; A_{\frak{q}, K}) \cong H^{r-R(\frak{q})} (\frak{l}, L\cap K ; \mathbb{C}), \]
where $R(\frak{q}) := \textrm{dim}(\frak{u} \cap \frak{p})$. Let $Y_\frak{q}$ denote the compact dual of the 
Riemannian globally symmetric space $L/{L\cap K}$. Then $H^r (\frak{l}, L\cap K ; \mathbb{C}) \cong 
H^r (Y_\frak{q} ; \mathbb{C})$. And hence 
\[  H^r (\frak{g}, K; A_{\frak{q}, K}) \cong H^{r-R(\frak{q})} (Y_\frak{q} ; \mathbb{C}).\]
If $P_\frak{q}(t)$ denotes the Poincar\'{e} polynomial of $ H^* (\frak{g}, K; A_{\frak{q}, K})$, then by the above result, we 
have 
\[ P_\frak{q}(t) = t^{R(\frak{q})} P(Y_\frak{q} , t). \]
 Conversely, if $\pi$ is an irreducible unitary represention of $G$ with non-zero $(\frak{g},K)$-cohomology,  then $\pi $ is unitarily equivalent 
to $A_\frak{q}$ for some $\theta$-stable parabolic subalgebra $\frak{q}$ of $\frak{g}_0$ \cite[Th. 4.1]{voganz}.  See also \cite{vogan97} for a beautiful description of 
the theory of $(\frak{g}, K)$-modules $A_{\frak{q} , K}.$ 

 If rank$(G) =$ rank$(K)$ and $\frak{q}$ is a $\theta$-stable Borel subalgebra that is, $\frak{q}$ is a Borel subalgebra of $\frak{g}$ 
containing a Cartan subalgebra of $\frak{k}$, then $A_\frak{q}$ is a 
discrete series representation of $G$ with trivial infinitesimal character. In this case, $R(\frak{q})= \frac{1}{2} \textrm{ dim}(G/K)$,  
$L$ is a maximal torus in $K$ and hence 
\[H^r (\frak{g}^\mathbb{C}, K; A_{\frak{q}, K}) =
\begin{cases}
0 & \textrm{if } r \neq R(\frak{q}), \\
\mathbb{C} & \textrm{if } r = R(\frak{q}). 
\end{cases}
\]
If we take $\frak{q} = \frak{g}$, then $L=G$ and $A_\frak{q} = \mathbb{C}$, the trivial representation of $G$. If $G/K$ is Hermitian symmetric, choose a Borel-de Siebenthal positive 
root system in $\Delta(\frak{g},\frak{t})$ containing $\Delta_\frak{k}^+,$ and a unique non-compact simple root $\nu;$ and define 
$\frak{p}_+=\sum_{\beta \in \Delta(\frak{g},\frak{t}), n_\nu(\beta)=1}\frak{g}^\beta, 
\frak{p}_-=\sum_{\beta \in \Delta(\frak{g},\frak{t}),n_\nu(\beta)=-1} \frak{g}^\beta;$ where $n_{\nu}(\beta)$ is the coefficient of $\nu$ in the decomposition of $\beta$ 
into simple roots.  Then $\frak{p}=\frak{p}_+\oplus\frak{p}_-, \frak{u}\cap\frak{p}=(\frak{u}\cap\frak{p}_+)\oplus(\frak{u}\cap\frak{p}_-).$ 
Define $R_+(\frak{q})=$dim$(\frak{u}\cap\frak{p}_+), R_-(\frak{q})=$dim$(\frak{u}\cap\frak{p}_-).$ So $R(\frak{q})=R_+(\frak{q})+R_-(\frak{q}).$ One has a Hodge decomposition 
\[H^r (\frak{g}, K; A_{\frak{q}, K}) = \oplus_{p+q=r} H^{p,q} (\frak{g}, K; A_{\frak{q}, K}) =  H^{p,q} (\frak{g}, K; A_{\frak{q}, K}) \cong 
H^{p-R_+(\frak{q}),q-R_-(\frak{q})} (Y_\frak{q} ; \mathbb{C}) ; \]
where $p+q=r, p-q=R_+(\frak{q})-R_-(\frak{q}).$ See \cite[Ch. II, \S 4]{borel-wallach}, \cite{gh}, \cite{voganz}. The pair $(R_+(\frak{q}), R_-(\frak{q}))$ 
is referred to be the {\it Hodge type} of the representation $A_\frak{q}.$ 

\noindent
\section{Discrete series representations} 

We follow the notations from the previous section. 
Assume that rank$(G)=$ rank$(K),$ so that $G$ admits discrete series representation. A non-singular linear function $\lambda$ on $i\frak{t}_0$ relative to $\Delta(\frak{g},\frak{t})$ dominant 
with respect to $\Delta_\frak{k}^+,$ defines uniquely a positive root system $\Delta_\lambda^+$ of $\Delta(\frak{g},\frak{t})$ containing $\Delta_\frak{k}^+.$ Define 
$\delta_\frak{g} =\frac{1}{2} \sum_{\alpha \in \Delta_\lambda^+}\alpha, \delta_\frak{k} = \frac{1}{2}\sum_{\alpha \in \Delta_\frak{k}^+}\alpha.$ If $\lambda +\delta_\frak{g}$ is 
analytically integral(that is, $\lambda +\delta_\frak{g}$ is the differential of a Lie group homomorphism on the Cartan subgroup of $G$ corresponding to $\frak{t}_0$), then there exists a 
discrete series representation $\pi_\lambda$ with infinitesimal character $\chi_\lambda$(it is the character of the Verma module of $\frak{g}$ with highest weight 
$\lambda-\delta_\frak{g}$); the associated $(\frak{g}, K)$-module $\pi_{\lambda,K}$ contains an 
irreducible $K$-submodule with highest weight $\Lambda=\lambda+\delta_\frak{g}-2\delta_\frak{k}$ and 
it occurs with multiplicity one in $\pi_{\lambda, K}.$ Any other irreducible $K$-module that occurs in $\pi_{\lambda, K}$ has highest weight 
of the form $\Lambda + \sum_{\alpha \in \Delta_\lambda^+} n_\alpha \alpha,$ with $n_\alpha$ a non-negative integer. Upto unitary equivalence these are all discrete series representations of $G.$ 
This $\lambda$ is called the {\it Harish-Chandra parameter}, $\Lambda$ is called the {\it Blattner parameter} of the discrete series representation $\pi_\lambda.$ 
The positive root system $\Delta_\lambda^+$ is called the {\it Harish-Chandra root order} corresponding to $\lambda.$ If $G/K$ is Hermitian symmetric, then $\pi_\lambda$ is a 
holomorphic discrete series representation {\it if and only if} the Harish-Chandra root order corresponding to $\lambda$ is a Borel-de Siebenthal positive root system. See \cite{hc1}, \cite{knapp}.

\noindent
\section{Irreducible unitary representations with non-zero $(\frak{g}, K)$-cohomology of the Lie group $SO_0(2,m)$} 
 
Let $I_{2,m} = \left(
\begin{array}{ccc}
-I_2 & 0 \\
0 & I_m \\
\end{array}
\right)$, where $I_m$ denotes the identity matrix of order $m$. 
Let $G$ be the group $SO_0(2, m)$, the connected component of the group $\{g \in SL(m+2, \mathbb{R}) : g^t I_{2,m} g = I_{2,m} \}$. Then $G$ is a Lie group  
with Lie algebra $\frak{g}_0 = \frak{so}(2,m)$ = $\{ \left(
\begin{array}{ccc}
X_1 & X_2 \\
X_2^t & X_3 \\
\end{array}
\right): \textrm{all }X_i \textrm{ real}, X_1, X_3 \textrm{ skew symmetric of order } 2 \textrm{ and } m \textrm{ respectively}, X_2 \textrm{ arbitrary}\}$. 
The map $\theta : G \longrightarrow G$ given by $\theta(g) = I_{2,m} g I_{2,m}$ for all $g \in G$ is a Cartan involution with maximal compact subgroup $K= \{ \left(
\begin{array}{ccc}
A & 0 \\
0 & B \\
\end{array}
\right): A \in SO(2), B \in SO(m)\} \cong SO(2) \times SO(m)$. The differential of $\theta$ at the identity element of $G$ is the map $ X \mapsto  I_{2,m} X I_{2,m}$ for all $X \in \frak{g}_0$, 
and is denoted by the same notation $\theta : \frak{g}_0 \longrightarrow \frak{g}_0$. Then $\theta : \frak{g}_0 \longrightarrow \frak{g}_0$ is a Cartan involution and 
$\frak{g}_0 = \frak{k}_0 \oplus \frak{p}_0$ is the Cartan decomposition corresponding to $+1$ and $-1$-eigenspaces of $\theta$. Note that $\frak{k}_0 = \{ \left(
\begin{array}{ccc}
A & 0 \\
0 & B \\
\end{array}
\right): A \in \frak{so}(2), B \in \frak{so}(m)\} \cong \frak{so}(2) \oplus \frak{so}(m)$, and it is the Lie subalgebra of $\frak{g}_0$ corresponding to the connected Lie subgroup $K$ of $G$. 
Note that $G/K$ is an irreducible Hermitian symmetric space of non-compact type.

   The complexification of $\frak{g}_0$ is $\frak{g} = \frak{so}(m+2, \mathbb{C})$, and
\[ \frak{g} = 
\begin{cases}
\frak{b}_l & \textrm{if } m = 2l-1, \\
\frak{\delta}_l & \textrm{if } m = 2l-2. 
\end{cases}
\]

Let $\frak{k} = \frak{k}_0^\mathbb{C}\subset \frak{g}, \frak{p} = \frak{p}_0^\mathbb{C} \subset \frak{g},$  and 
$\frak{t}'_0$ be a maximal abelian subspace of $\frak{so}(m)$. Then $\frak{t}_0 = \frak{so}(2) \oplus \frak{t}'_0$ be a maximal abelian subspace of $\frak{k}_0$, and $\frak{h} = 
\frak{t}_0^\mathbb{C}$ is a Cartan subalgebra of $\frak{k}$ as well as of $\frak{g}$. Let $\Delta = \Delta(\frak{g}, \frak{h})$ be the set of all non-zero roots of $\frak{g}$ 
with respect to the Cartan subalgebra $\frak{h},$ similarly $\Delta_\frak{k} = \Delta(\frak{k}, \frak{h})$ be the set of all non-zero roots of $\frak{k}$ 
with respect to $\frak{h},$ and 
$\Delta_n = \Delta \setminus \Delta_\frak{k}=$ the set of all non-compact roots of $\frak{g}$ with respect to $\frak{h}$. 
Then $\frak{k}= \frak{h} + \sum_{\alpha \in \Delta_\frak{k}} \frak{g}^\alpha, \frak{p} = \sum_{\alpha \in \Delta_n} \frak{g}^\alpha,$ 
where $\frak{g}^\alpha$ is the root subspace of $\frak{g}$ of the root $\alpha \in \Delta$. Let $B$ denote the Killing form of $\frak{g}$. 
For any linear function  $\lambda$ on $\frak{h},$ there exists unique $H_\lambda \in \frak{h}$ such that 
\[ \lambda (H) = B (H, H_\lambda ) \textrm{ for all } H \in \frak{h} . \]
  Put $\langle \lambda , \mu \rangle = B(H_\lambda, H_\mu)$ for any linear functions $\lambda, \mu$ on $\frak{h}$, 
$H_\alpha ^* = 2 H_\alpha /\alpha (H_\alpha)$ for all $\alpha \in \Delta,$ and $\frak{h}_\mathbb{R} = \sum_{\alpha \in \Delta} \mathbb{R} H_\alpha$. Then $\frak{h}_\mathbb{R} = 
i\frak{t}_0.$ 

For $m \neq 2,$  let $\Delta^+$ be a Borel-de Siebenthal positive root system of $\Delta$ with a unique non-compact simple root $\phi_1$, that is 
\[n_{\phi_1}(\alpha) = 
\begin{cases}
0 & \textrm{if } \alpha \in \Delta_\frak{k}, \\
\pm 1 & \textrm{if } \alpha \in \Delta_n. 
\end{cases}
\] If $m=2,$ then $\Delta^+ = \{\phi_1, \phi_2\},$ 
where both of $\phi_1,$ and $\phi_2$ are non-compact and simple. 
Let $\Delta_\frak{k}^+ = \Delta^+ \cap \Delta_\frak{k}, 
\Delta_n^+ = \Delta^+ \cap \Delta_n,$ and $\Delta_n^- = -\Delta_n^+$. Write $\frak{p}_+ = \sum_{\alpha \in \Delta_n^+} \frak{g}^\alpha,$ and 
$\frak{p}_- = \sum_{\alpha \in \Delta_n^-} \frak{g}^\alpha$. Then $\frak{p} = \frak{p}_+ \oplus \frak{p}_-$ is the irreducible decomposition of $\frak{p}$ under the adjoint representation of 
$\frak{k},$ if $m \neq 2$. For $m \neq 2,$ let $\Phi_\frak{k} = \{ \phi_2, \phi_3, \ldots , \phi_l\}$ be the set of all simple roots in in $\Delta_\frak{k}^+$. 
Then $\Phi = \{\phi_1, \phi_2, \ldots , \phi_l\}$ is the set of all simple roots in $\Delta$. In the diagrams of this article, the non-compact roots are represented by black vertices.

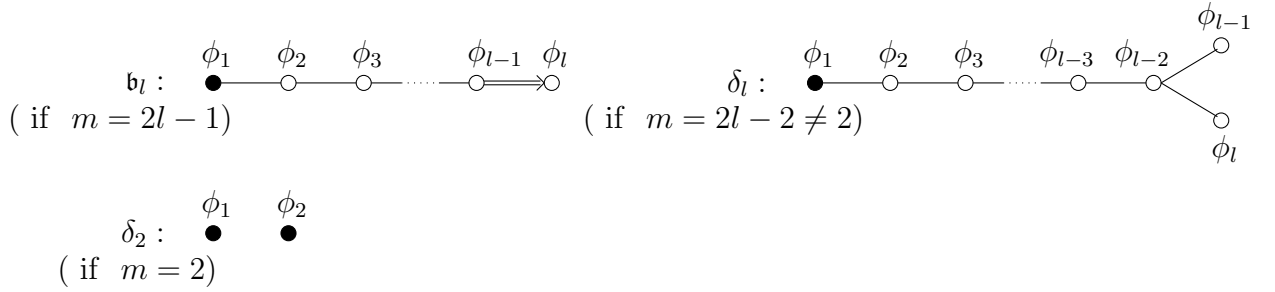
\begin{figure}[!h]
\begin{center} 
\caption{Dynkin diagram of $\frak{g}= \frak{so}(m+2, \mathbb{C})$}\label{dynkin} 
\begin{tikzpicture}

\filldraw [black] (0,0) circle [radius = 0.1]; 
\draw (1,0) circle [radius = 0.1]; 
\draw (2,0) circle [radius = 0.1]; 
\draw (3.5,0) circle [radius = 0.1]; 
\draw (4.5,0) circle [radius = 0.1]; 
\node [above] at (0.05,0.05) {$\phi_1$}; 
\node [above] at (1.05,0.05) {$\phi_2$}; 
\node [above] at (2.05,0.05) {$\phi_3$}; 
 \node [above] at (3.75,0.05) {$\phi_{l-1}$}; 
\node [above] at (4.55,0.05) {$\phi_l$}; 
\node [left] at (-0.5,0) {$\frak{b}_l :$}; 
\node[ left] at (0.4,-0.5) {(\textrm{ if } $m=2l-1$)}; 
\draw (0.1,0) -- (0.9,0); 
\draw (1.1,0) -- (1.9,0);
\draw (2.1,0) -- (2.5,0); 
\draw [dotted] (2.5,0) -- (3,0); 
\draw (3,0) -- (3.4,0); 
\draw (4.4,0) -- (4.3,0.1); 
\draw (4.4,0) -- (4.3,-0.1); 
\draw (3.6,0.025) -- (4.35,0.025); 
\draw (3.6,-0.025) -- (4.35,-0.025);

\filldraw [black] (8,0) circle [radius = 0.1]; 
\draw (9,0) circle [radius = 0.1]; 
\draw (10,0) circle [radius = 0.1]; 
\draw (11.5,0) circle [radius = 0.1]; 
\draw (12.5,0) circle [radius = 0.1]; 
\draw (13.4,0.5) circle [radius = 0.1]; 
\draw (13.4,-0.5) circle [radius = 0.1]; 
\node [above] at (8.05,0.05) {$\phi_1$}; 
\node [above] at (9.05,0.05) {$\phi_2$}; 
\node [above] at (10.05,0.05) {$\phi_3$}; 
\node [above] at (11.35,0.05) {$\phi_{l-3}$}; 
\node [above] at (12.35,0.05) {$\phi_{l-2}$}; 
\node [above] at (13.45,0.55) {$\phi_{l-1}$}; 
\node [below] at (13.45,-0.55) {$\phi_l$}; 
\node [left] at (7.5,0) {$\frak{\delta}_l :$}; 
\node [left] at (8.8,-0.5) {(\textrm{ if } $m = 2l-2 \neq 2$)}; 
\draw (8.1,0) -- (8.9,0); 
\draw (9.1,0) -- (9.9,0); 
\draw (10.1,0) -- (10.5,0); 
\draw [dotted] (10.5,0) -- (11,0); 
\draw (11,0) -- (11.4,0); 
\draw (11.6,0) -- (12.4,0); 
\draw (12.6,0) -- (13.35,0.45); 
\draw (12.6,0) -- (13.35,-0.45); 

\filldraw [black] (0,-2) circle [radius = 0.1]; 
\filldraw [black] (1,-2) circle [radius = 0.1]; 
\node [above] at (0.05,-1.95) {$\phi_1$}; 
\node [above] at (1.05,-1.95) {$\phi_2$}; 
\node [left] at (-0.5,-2) {$\frak{\delta}_2 :$}; 
\node[ left] at (0.2,-2.5) {(\textrm{ if } $m=2$)}; 

\end{tikzpicture} 
\end{center}
\end{figure} 

   Since $A_{\textrm{Ad}(k)(\frak{q})}$ is unitarily equivalent to $A_\frak{q}$ for all $k\in K,$ to determine all unitarily inequivalent 
$A_\frak{q},$ it is sufficient to determine all $\theta$-stable parabolic subalgebras $\frak{q}$ of $\frak{g}_0$ which contain 
$\frak{h} \oplus \sum_{\alpha \in \Delta_\frak{k}^+} \frak{g}^\alpha$. 
Let $\frak{q}$ be a $\theta$-stable parabolic subalgebra of $\frak{g}_0$ containing $\frak{h} \oplus \sum_{\alpha \in \Delta_\frak{k}^+} \frak{g}^\alpha$.  
Then there exists $x \in \frak{h}_\mathbb{R}$ such that 
$\frak{q} = \frak{q}_x = \frak{h} \oplus \sum_{\alpha(x) \ge 0, \alpha \in \Delta} \frak{g}^\alpha = \frak{l}_x \oplus \frak{u}_x,$ where 
$\frak{l}_x = \frak{h} \oplus \sum_{\alpha(x) = 0, \alpha \in \Delta} \frak{g}^\alpha$ is the Levi subalgebra of $\frak{q}_x,$ and 
$\frak{u}_x= \sum_{\alpha(x) > 0, \alpha \in \Delta} \frak{g}^\alpha$ is the nilradical of $\frak{q}_x$. Note that $\alpha (x) \ge 0$ for all $\alpha \in \Delta_\frak{k}^+.$ 
Write $\Delta(\frak{u}_x \cap \frak{p}_+) = \{ \beta \in \Delta_n^+ : \beta (x) > 0 \},$ and  $\Delta(\frak{u}_x \cap \frak{p}_-) = \{ \beta \in \Delta_n^- : \beta (x) > 0 \}$. 
For $x, y \in \frak{h}_\mathbb{R}, A_{\frak{q}_x}$ is unitarily equivalent to $A_{\frak{q}_x}$ {\it iff} $\Delta(\frak{u}_x \cap \frak{p}_+) \cup \Delta(\frak{u}_x \cap \frak{p}_-) = 
\Delta(\frak{u}_y \cap \frak{p}_+) \cup \Delta(\frak{u}_y \cap \frak{p}_-)$. So we will determine all possible candidates of $\Delta(\frak{u}_x \cap \frak{p}_+) \cup 
\Delta(\frak{u}_x \cap \frak{p}_-),$ where $x  \in \frak{h}_\mathbb{R}$ with $\alpha (x) \ge 0$ for all $\alpha \in \Delta_\frak{k}^+$. For $x \in \frak{h}_\mathbb{R}$ 
with $\alpha (x) \ge 0$ for all $\alpha \in \Delta_\frak{k}^+,$ we may 
write $x = H_\lambda$ for some linear function $\lambda$ on $\frak{h}_\mathbb{R}$ with $\langle \lambda, \alpha \rangle \ge 0$ for all $\alpha \in \Delta_\frak{k}^+$. 
We write $\frak{q}_\lambda = \frak{q}_x, \frak{l}_\lambda = \frak{l}_x,$ and $\frak{u}_\lambda = \frak{u}_x$. Thus 
$\Delta(\frak{u}_\lambda \cap \frak{p}_+) = \{ \beta \in \Delta_n^+ : \langle \lambda , \beta \rangle > 0 \},$ and  
$\Delta(\frak{u}_\lambda \cap \frak{p}_-) = \{ \beta \in \Delta_n^- : \langle \lambda, \beta \rangle > 0 \}$. Clearly $\Delta(\frak{u}_\lambda \cap \frak{p}_+)$ and 
$-\Delta(\frak{u}_\lambda \cap \frak{p}_-)$ are disjoint subsets of $\Delta_n^+$.

Now we begin our proofs with this elementary lemma. 

\begin{lemma}\cite[Remark 3.3(iii)]{mondal-sankaran2}\label{lemma}
Let $\lambda$ be a linear function on $\frak{h}_\mathbb{R}$ such that $\langle \lambda, \alpha \rangle \ge 0$ for all $\alpha \in \Delta_\frak{k}^+$. \\
(i) Let $\beta, \gamma \in \Delta_n$ be such that $\gamma > \beta,$ and both belong to $\Delta_n^+$ or $\Delta_n^-$. Then $\langle \lambda , \beta \rangle > 0 
\implies \langle \lambda , \gamma \rangle >0$.  \\ 
(ii) Let $\phi \in \Delta_\frak{k}^+$ be simple and $\beta \in \Delta_n$.
 If $\beta -\phi \in \Delta, \langle \lambda , \beta - \phi \rangle = 0,$ and $\langle \lambda , \beta \rangle > 0,$ then $\langle \lambda , \phi \rangle > 0$. \\ 
(iii) Let $\phi \in \Delta_\frak{k}^+$ be simple and $\beta \in \Delta_n$.
 If $\beta + \phi \in \Delta, \langle \lambda , \beta + \phi \rangle = 0,$ and $\langle \lambda , \beta \rangle = 0,$ then $\langle \lambda , \phi \rangle = 0$. 
 If $\beta - \phi \in \Delta, \langle \lambda , \beta - \phi \rangle = 0,$ and $\langle \lambda , \beta \rangle = 0,$ then $\langle \lambda , \phi \rangle = 0$. 
\end{lemma}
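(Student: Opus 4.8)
The plan is to observe that parts (ii) and (iii) are pure consequences of the bilinearity of $\langle \cdot , \cdot \rangle$, carrying no structural content, and to concentrate the work on part (i), which I would derive from the Borel--de Siebenthal shape of $\Delta^+$ together with the standing positivity hypothesis $\langle \lambda, \alpha \rangle \ge 0$ for all $\alpha \in \Delta_\frak{k}^+$.

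For (ii) I would write $\beta = (\beta - \phi) + \phi$ and expand by linearity: $\langle \lambda, \beta \rangle = \langle \lambda, \beta - \phi \rangle + \langle \lambda, \phi \rangle = \langle \lambda, \phi \rangle$, using $\langle \lambda, \beta - \phi \rangle = 0$. Since $\langle \lambda, \beta \rangle > 0$, this forces $\langle \lambda, \phi \rangle > 0$. Part (iii) is identical in spirit: from $\phi = (\beta + \phi) - \beta$ one gets $\langle \lambda, \phi \rangle = \langle \lambda, \beta + \phi \rangle - \langle \lambda, \beta \rangle = 0$, and from $\phi = \beta - (\beta - \phi)$ one gets $\langle \lambda, \phi \rangle = \langle \lambda, \beta \rangle - \langle \lambda, \beta - \phi \rangle = 0$. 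No input beyond linearity and the stated vanishing hypotheses is needed here.

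The substance is in (i), and the key structural fact I would record first is the defining property of the Borel--de Siebenthal order: every $\alpha \in \Delta_n^+$ has $n_{\phi_1}(\alpha) = 1$ and every $\alpha \in \Delta_n^-$ has $n_{\phi_1}(\alpha) = -1$. Now take $\beta, \gamma \in \Delta_n^+$ with $\gamma > \beta$ (the case $\beta, \gamma \in \Delta_n^-$ being handled identically). Writing $\gamma - \beta = \sum_{i=1}^l m_i \phi_i$, the order relation $\gamma > \beta$ gives $m_i \ge 0$ for all $i$, while comparing $\phi_1$-coefficients gives $m_1 = n_{\phi_1}(\gamma) - n_{\phi_1}(\beta) = 1 - 1 = 0$. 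Hence $\gamma - \beta = \sum_{i=2}^l m_i \phi_i$ is a non-negative integer combination of the simple compact roots $\phi_2, \ldots, \phi_l$. Since each $\phi_i \in \Delta_\frak{k}^+$, the hypothesis gives $\langle \lambda, \phi_i \rangle \ge 0$, so $\langle \lambda, \gamma - \beta \rangle \ge 0$ and therefore $\langle \lambda, \gamma \rangle = \langle \lambda, \beta \rangle + \langle \lambda, \gamma - \beta \rangle \ge \langle \lambda, \beta \rangle > 0$, as required.

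I do not expect a genuine obstacle: the only non-formal ingredient is the constancy of the $\phi_1$-coefficient across $\Delta_n^+$ (respectively $\Delta_n^-$), which is precisely what the Borel--de Siebenthal condition guarantees and which is what collapses $\gamma - \beta$ into the compact root lattice spanned by $\phi_2, \ldots, \phi_l$ with non-negative coefficients. The one point I would still check is the degenerate case $m = 2$, where $\Delta_\frak{k}^+$ is empty and both simple roots are non-compact: there (ii) and (iii) are vacuous, and in (i) no two distinct elements of $\Delta_n^+$ are comparable in the order, so the statement holds trivially.
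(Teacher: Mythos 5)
Your proposal is correct and follows essentially the same route as the paper: parts (ii) and (iii) by pure bilinearity, and part (i) by writing $\gamma - \beta$ as a non-negative integer combination of the compact simple roots $\phi_2, \ldots, \phi_l$ and using $\langle \lambda, \phi_i \rangle \ge 0$. The only difference is that the paper simply asserts $\gamma = \beta + \sum_{2 \le i \le l} n_i \phi_i$ with $n_i \ge 0$, whereas you justify the vanishing of the $\phi_1$-coefficient explicitly via the Borel--de Siebenthal property $n_{\phi_1} \equiv \pm 1$ on $\Delta_n^\pm$ --- a detail worth making explicit, and your observation that the $m=2$ case is vacuous is likewise a sound (if unstated in the paper) check.
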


\begin{proof} 
(i) Let $\beta, \gamma \in \Delta_n$ be such that $\gamma > \beta$ and both belong to $\Delta_n^+$ or $\Delta_n^-$. 
Then $\gamma = \beta + \sum_{2\le i \le l} n_i \phi_i,$ where $n_i \in \mathbb{N} \cup \{0\}$ for all $2 \le i \le l$. 
Since $\langle \lambda , \beta \rangle > 0,$ and $\langle \lambda , \phi_i \rangle \ge  0$ for all $2 \le i \le l,$ we have $\langle \lambda , \gamma \rangle > 0$.  

(ii) $\langle \lambda , \beta - \phi \rangle = 0 \implies  \langle \lambda , \phi \rangle =  \langle \lambda , \beta \rangle > 0$. 

(iii) $\langle \lambda , \beta + \phi \rangle = 0 \implies  \langle \lambda , \phi \rangle = - \langle \lambda , \beta \rangle = 0,$ and \\ 
$\langle \lambda , \beta - \phi \rangle = 0 \implies  \langle \lambda , \phi \rangle =  \langle \lambda , \beta \rangle = 0$. 
\end{proof}

  Lemma \ref{lemma}(i) says that $\Delta(\frak{u}_\lambda \cap \frak{p}_+)$ is either empty or a set of the form $\cup_{1 \le i \le r }\{ \beta \in \Delta_n^+ : \beta \ge \xi_i \},$ and  
$\Delta(\frak{u}_\lambda \cap \frak{p}_-)$ is either empty or a set of the form 
$\cup_{1\le j \le s} \{ -\beta \in \Delta_n^- : -\beta \ge -\eta_j \}= \cup_{1\le j \le s} (-\{ \beta \in \Delta_n^+ : \beta \le \eta_j \})$, where 
$\{ \xi_1, \xi_2, \ldots , \xi_r\}, \{ \eta_1, \eta_2, \ldots , \eta_s\}$ are sets of pairwise non-comparable roots in $\Delta_n^+$. 

  If $\frak{g} = \frak{b}_l (l \ge 2)$, then $\Delta_n^+ = \{\phi_1, \phi_1 + \phi_2, \ldots , \phi_1 + \phi_2 + \cdots + \phi_l,  \phi_1 + \phi_2 + \cdots + 2\phi_l, 
\phi_1 + \phi_2 + \cdots + 2\phi_{l-1} + 2\phi_l, \ldots ,  \phi_1 + 2\phi_2 + \cdots + 2\phi_l \}$.  If $\frak{g} = \frak{b}_1$, then $\Delta_n^+ = \{\phi_1 \}$. 
If $\frak{g} = \frak{\delta}_l (l\ge 4)$, then 
$\Delta_n^+ = \{\phi_1, \phi_1 + \phi_2, \ldots , \phi_1 + \phi_2 + \cdots + \phi_{l-2}, \phi_1 + \phi_2 + \cdots + \phi_{l-2} + \phi_{l-1}, \phi_1 + \phi_2 + \cdots + \phi_{l-2} + \phi_l, 
 \phi_1 + \phi_2 + \cdots + \phi_{l-2} + \phi_{l-1} + \phi_l, \phi_1 + \phi_2 + \cdots + 2\phi_{l-2} + \phi_{l-1} + \phi_l, \ldots , \phi_1 + 2\phi_2 + \cdots + 2\phi_{l-2} + \phi_{l-1} + \phi_l \}$.
If $\frak{g} = \frak{\delta}_2$,  then $\Delta_n^+ = \{ \phi_1, \phi_2 \}$. 
If $\frak{g} = \frak{\delta}_3$,   then $\Delta_n^+ = \{\phi_1, \phi_1+\phi_2, \phi_1+\phi_3, \phi_1+\phi_2+\phi_3\}$. 
 
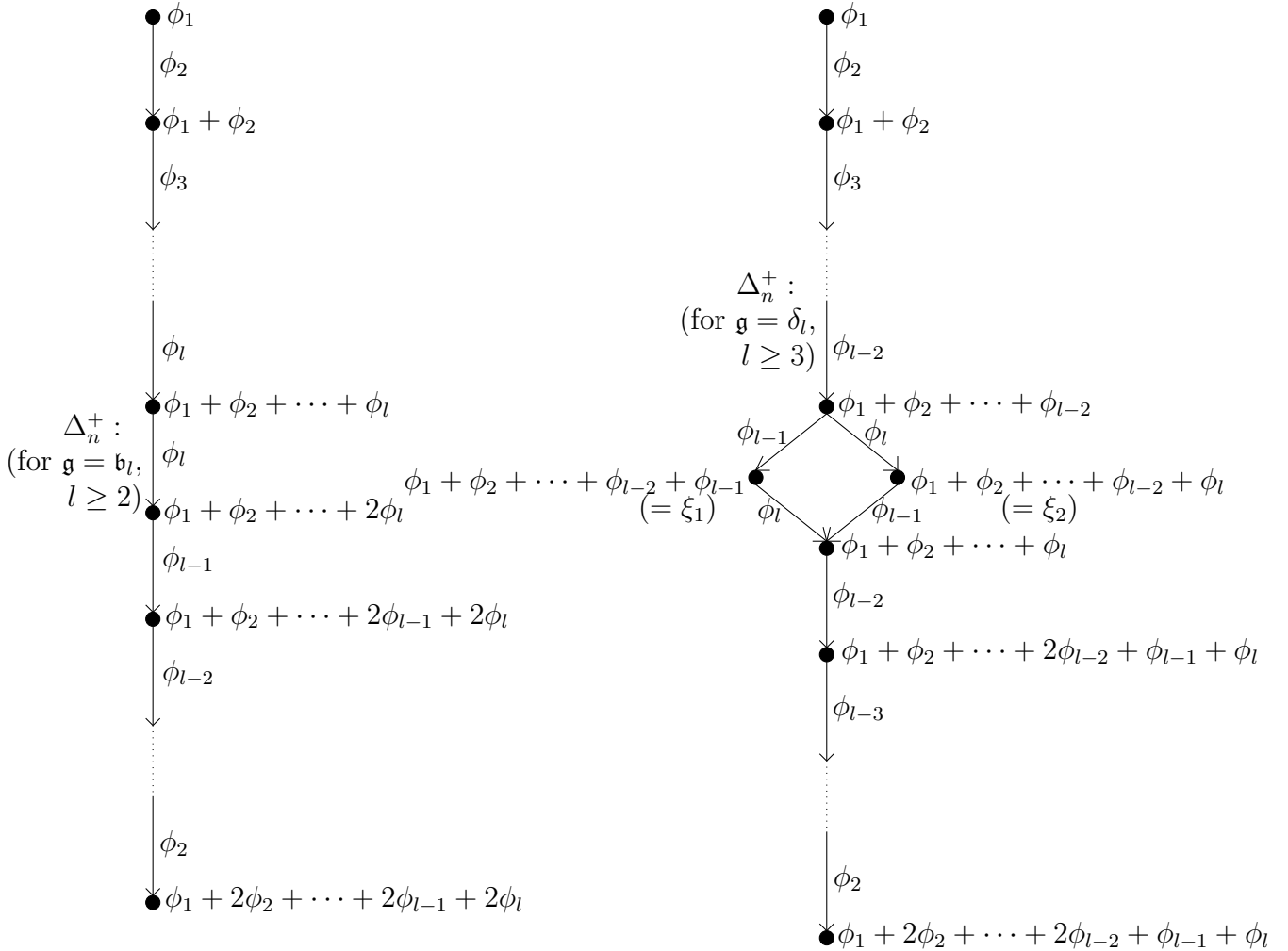
\begin{figure}[!h]
\begin{center} 
\caption{Diagram of $\Delta_n^+$}\label{diagram}
\begin{tikzpicture}

\filldraw [black] (0,0) circle [radius = 0.1]; 
\filldraw [black] (0,-1.5) circle [radius = 0.1]; 
\filldraw [black] (0,-5.5) circle [radius = 0.1]; 
\filldraw [black] (0,-7) circle [radius = 0.1];
\filldraw [black] (0,-8.5) circle [radius = 0.1];
\filldraw [black] (0,-12.5) circle [radius = 0.1];

\node [above] at (0.4,-0.3) {$\phi_1$}; 
\node [above] at (0.8,-1.8) {$\phi_1+\phi_2$}; 
\node [above] at (1.75,-5.8) {$\phi_1+\phi_2+\cdots+\phi_l$}; 
\node [above] at (1.85,-7.3) {$\phi_1+\phi_2+\cdots+2\phi_l$}; 
\node [above] at (2.6,-8.8) {$\phi_1+\phi_2+\cdots+2\phi_{l-1}+2\phi_l$}; 
\node [above] at (2.7,-12.8) {$\phi_1+2\phi_2+\cdots+2\phi_{l-1}+2\phi_l$}; 

\node [left] at (-0.3,-5.8) {$\Delta_n^+ :$}; 
\node[ left] at (0,-6.3) {(\textrm{for} $\frak{g}=\frak{b}_l$,};
\node[ left] at (0,-6.8) {$l \ge 2$)}; 

\node [above] at (0.3,-1) {$\phi_2$}; 
\node [above] at (0.3,-2.6) {$\phi_3$};
\node [above] at (0.3,-5) {$\phi_l$};
\node [above] at (0.3,-6.5) {$\phi_l$};
\node [above] at (0.5,-8) {$\phi_{l-1}$};
\node [above] at (0.5,-9.6) {$\phi_{l-2}$};
\node [above] at (0.3,-12) {$\phi_2$};

\draw (0,-0.1) -- (0,-1.4); 
\draw (0,-1.6) -- (0,-3);
\draw [dotted] (0,-3) -- (0,-4); 
\draw (0,-4) -- (0,-5.4); 
\draw (0,-5.6) -- (0,-6.9); 
\draw (0,-7.1) -- (0,-8.4); 
\draw (0,-8.6) -- (0,-10);
\draw [dotted] (0,-10) -- (0,-11); 
\draw (0,-11) -- (0,-12.4);

\draw (0,-1.4) -- (-0.1,-1.3); 
\draw (0,-1.4) -- (0.1,-1.3); 
\draw (0,-3) -- (-0.1,-2.9); 
\draw (0,-3) -- (0.1,-2.9); 
\draw (0,-5.4) -- (-0.1,-5.3); 
\draw (0,-5.4) -- (0.1,-5.3); 
\draw (0,-6.9) -- (-0.1,-6.8); 
\draw (0,-6.9) -- (0.1,-6.8); 
\draw (0,-8.4) -- (-0.1,-8.3); 
\draw (0,-8.4) -- (0.1,-8.3); 
\draw (0,-10) -- (-0.1,-9.9); 
\draw (0,-10) -- (0.1,-9.9); 
\draw (0,-12.4) -- (-0.1,-12.3); 
\draw (0,-12.4) -- (0.1,-12.3);

\filldraw [black] (9.5,0) circle [radius = 0.1]; 
\filldraw [black] (9.5,-1.5) circle [radius = 0.1]; 
\filldraw [black] (9.5,-5.5) circle [radius = 0.1]; 
\filldraw [black] (8.5,-6.5) circle [radius = 0.1];
\filldraw [black] (10.5,-6.5) circle [radius = 0.1];
\filldraw [black] (9.5,-7.5) circle [radius = 0.1];
\filldraw [black] (9.5,-9) circle [radius = 0.1];
\filldraw [black] (9.5,-13) circle [radius = 0.1];

\node [above] at (9.9,-0.3) {$\phi_1$}; 
\node [above] at (10.3,-1.8) {$\phi_1+\phi_2$}; 
\node [above] at (11.45,-5.8) {$\phi_1+\phi_2+\cdots+\phi_{l-2}$}; 
\node [above] at (5.95,-6.9) {$\phi_1+\phi_2+\cdots+\phi_{l-2}+\phi_{l-1}$}; 
\node [above] at (7.4,-7.3) {$(=\xi_1)$};
\node [above] at (12.9,-6.9) {$\phi_1+\phi_2+\cdots+\phi_{l-2}+\phi_l$}; 
\node [above] at (12.5,-7.3) {$(=\xi_2)$};
\node [above] at (11.3,-7.8) {$\phi_1+\phi_2 +\cdots+\phi_l$}; 
\node [above] at (12.65,-9.3) {$\phi_1+\phi_2+\cdots+2\phi_{l-2}+\phi_{l-1}+\phi_l$};
\node [above] at (12.7,-13.3) {$\phi_1+2\phi_2+\cdots+2\phi_{l-2}+\phi_{l-1}+\phi_l$}; 

\node [left] at (9.2,-3.8) {$\Delta_n^+ :$}; 
\node[ left] at (9.5,-4.3) {(\textrm{for} $\frak{g}=\frak{\delta}_l$,}; 
\node[ left] at (9.5,-4.8) {$l \ge 3$)}; 

\node [above] at (9.8,-1) {$\phi_2$}; 
\node [above] at (9.8,-2.6) {$\phi_3$};
\node [above] at (9.95,-5) {$\phi_{l-2}$};
\node [above] at (8.6,-6.2) {$\phi_{l-1}$};
\node [above] at (10.2,-6.2) {$\phi_l$};
\node [above] at (8.7,-7.3) {$\phi_l$};
\node [above] at (10.5,-7.3) {$\phi_{l-1}$};
\node [above] at (9.95,-8.5) {$\phi_{l-2}$};
\node [above] at (9.95,-10.1) {$\phi_{l-3}$};
\node [above] at (9.8,-12.5) {$\phi_2$};

\draw (9.5,-0.1) -- (9.5,-1.4); 
\draw (9.5,-1.6) -- (9.5,-3);
\draw [dotted] (9.5,-3) -- (9.5,-4); 
\draw (9.5,-4) -- (9.5,-5.4); 
\draw (9.5,-5.6) -- (8.5,-6.4); 
\draw (9.5,-5.6) -- (10.5,-6.4); 
\draw (8.5,-6.6) -- (9.5,-7.4); 
\draw (10.5,-6.6) -- (9.5,-7.4); 
\draw (9.5,-7.6) -- (9.5,-8.9); 
\draw (9.5,-9.1) -- (9.5,-10.5);
\draw [dotted] (9.5,-10.5) -- (9.5,-11.5); 
\draw (9.5,-11.5) -- (9.5,-12.9);

\draw (9.5,-1.4) -- (9.4,-1.3); 
\draw (9.5,-1.4) -- (9.6,-1.3); 
\draw (9.5,-3) -- (9.4,-2.9); 
\draw (9.5,-3) -- (9.6,-2.9); 
\draw (9.5,-5.4) -- (9.4,-5.3); 
\draw (9.5,-5.4) -- (9.6,-5.3); 
\draw (8.5,-6.4) -- (8.6,-6.2); 
\draw (8.5,-6.4) -- (8.7,-6.4); 
\draw (10.5,-6.4) -- (10.5,-6.2); 
\draw (10.5,-6.4) -- (10.3,-6.4); 
\draw (9.3,-7.4) -- (9.5,-7.4); 
\draw (9.45,-7.2) -- (9.5,-7.4); 
\draw (9.7,-7.4) -- (9.5,-7.4); 
\draw (9.55,-7.2) -- (9.5,-7.4); 
\draw (9.5,-8.9) -- (9.4,-8.8); 
\draw (9.5,-8.9) -- (9.6,-8.8); 
\draw (9.5,-10.5) -- (9.4,-10.4); 
\draw (9.5,-10.5) -- (9.6,-10.4); 
\draw (9.5,-12.9) -- (9.4,-12.8); 
\draw (9.5,-12.9) -- (9.6,-12.8); 

\end{tikzpicture} 
\end{center}
\end{figure} 

\begin{tikzpicture}

\filldraw [black] (0,0) circle [radius = 0.1]; 
\filldraw [black] (1.5,0) circle [radius = 0.1]; 

\node [below] at (0,-0.05) {$\phi_1$}; 
\node [below] at (1.5,-0.05) {$\phi_2$}; 

\node [left] at (-0.3,0) {$\Delta_n^+ :$}; 
\node[ left] at (-0.3,-0.5) {(\textrm{for} $\frak{g}=\frak{\delta}_2$)};

\end{tikzpicture} 

   In the Figure \ref{diagram}, the vertices represent roots in $\Delta_n^+$. Two roots $\beta, \gamma \in \Delta_n^+$ are joined by a line with an arrow in the direction of 
$\gamma$ if $\gamma = \beta + \phi$ for some simple root $\phi \in \Delta_\frak{k}^+$. In this case, the simple root $\phi$ is given on one side of the
 line.

\subsection{Proof of Th.\ref{th1}}\label{proof}
Let $\omega_1, \omega_2, \ldots, \omega_l$ be the fundamental weights of $\frak{g}$ corresponding to the simple roots $\phi_1, \phi_2, \ldots, \phi_l$ respectively. 

(i) {\bf $\frak{g} = \frak{b}_l(l>1):$} Lemma \ref{lemma}(i) and the diagram of $\Delta_n^+$ in Figure \ref{diagram} show that $\Delta(\frak{u}_\lambda \cap \frak{p}_+)$ is either empty or 
a set of the form $\{ \beta \in \Delta_n^+ : \beta \ge \xi\},$ and $\Delta(\frak{u}_\lambda \cap \frak{p}_-)$ is either empty or a set of the form 
$\{ -\beta \in \Delta_n^- : -\beta \ge -\eta \}= -\{ \beta \in \Delta_n^+ : \beta \le \eta \}$, and $\Delta(\frak{u}_\lambda \cap \frak{p}_+), -\Delta(\frak{u}_\lambda \cap \frak{p}_-)$ are 
disjoint subsets of $\Delta_n^+$, where $\xi, \eta \in \Delta_n^+.$ \\ 
Let $\Delta(\frak{u}_\lambda \cap \frak{p}_-)$ be empty. Then $\Delta(\frak{u}_\lambda \cap \frak{p}_+)=\{ \beta \in \Delta_n^+ : \beta \ge \xi\},$ where $\xi > \phi_1 + \phi_2 + \cdots +
\phi_l$ is not possible. For then $\xi = \phi_1 + \cdots + \phi_{i-1} + 2\phi_i + \cdots + 2\phi_l,$ where $2 \le i \le l$. So $\langle \lambda , \phi_i \rangle > 0,$ by Lemma \ref{lemma}(ii). 
Again $\langle \lambda , \phi_1+\phi_2+\cdots+\phi_i \rangle =0, \langle \lambda , \phi_1+\phi_2+\cdots+\phi_{i-1} \rangle =0.$ Thus $\langle \lambda , \phi_i \rangle =0,$ a contradiction. 
If $\xi \le \phi_1 + \phi_2 + \cdots +\phi_l,$ then $\xi = \phi_1+\cdots+\phi_i,$ for some $1 \le i \le l,$ and $\Delta(\frak{u}_\lambda \cap \frak{p}_+)=\{ \beta \in \Delta_n^+ : \beta \ge \xi\}, 
\Delta(\frak{u}_\lambda \cap \frak{p}_-)=\phi,$ where $\lambda= \omega_i.$ Also $\Delta(\frak{u}_\lambda \cap \frak{p}_+)=\phi, \Delta(\frak{u}_\lambda \cap \frak{p}_-)=\phi,$ for 
$\lambda =0.$ Thus the number of equivalence classes of irreducible unitary representations with non-zero $(\frak{g}, K)$-cohomology for which 
$\Delta(\frak{u}_\lambda \cap \frak{p}_-) = \phi,$ is $l+1$.  \\
Let $\Delta(\frak{u}_\lambda \cap \frak{p}_-)=-\{ \beta \in \Delta_n^+ : \beta \le \phi_1 +\cdots + \phi_i \},$ where $1 \le i \le l-1.$ Then 
$\Delta(\frak{u}_\lambda \cap \frak{p}_+)=\{ \beta \in \Delta_n^+ : \beta \ge \xi\},$ where $\xi > \phi_1+\cdots+\phi_l, \xi \neq \phi_1+\cdots+\phi_i+2\phi_{i+1}+\cdots+2\phi_l$ is not possible. 
Because  $\xi > \phi_1+\cdots+\phi_l, \xi \neq \phi_1+\cdots+\phi_i+2\phi_{i+1}+\cdots+2\phi_l$ implies 
$\xi=\phi_1+\cdots+\phi_{j-1}+2\phi_j+\cdots+2\phi_l$ for some $2 \le j \le l, j \neq i+1.$ Then $\langle \lambda , \phi_{i+1} \rangle > 0, \langle \lambda , \phi_j \rangle > 0,$ by Lemma \ref{lemma}(ii). 
If $2 \le j \le i,$ then $\langle \lambda , \phi_1+\cdots+\phi_i+2\phi_{i+1}+\cdots+2\phi_l \rangle =0, \langle \lambda , \phi_1+\cdots+\phi_{i+1}+2\phi_{i+2}+\cdots+2\phi_l \rangle =0$. Thus 
$\langle \lambda , \phi_{i+1} \rangle =0$, a contradiction.  
If $i+2\le j \le l,$ then $\langle \lambda , \phi_1+\phi_2+\cdots+\phi_j \rangle =0, \langle \lambda , \phi_1+\phi_2+\cdots+\phi_{j-1} \rangle =0$. 
Thus $\langle \lambda , \phi_j \rangle =0$, a contradiction. If $\xi \le \phi_1 + \phi_2 + \cdots +\phi_l,$ then $\xi = \phi_1+\cdots+\phi_j,$ for some $i+1 \le j \le l,$ and 
$\Delta(\frak{u}_\lambda \cap \frak{p}_+)=\{ \beta \in \Delta_n^+ : \beta \ge \xi\},\Delta(\frak{u}_\lambda \cap \frak{p}_-)=-\{ \beta \in \Delta_n^+ : \beta \le \phi_1 +\cdots + \phi_i \},$ 
where $\lambda= \frac{\omega_{i+1}}{\langle \omega_{i+1},\phi_{i+1}\rangle}+\frac{\omega_j}{\langle \omega_j,\phi_j\rangle}-\frac{\omega_1}{\langle \omega_1,\phi_1\rangle}.$
If $\xi = \phi_1+\cdots+\phi_i+2\phi_{i+1}+\cdots+2\phi_l,$ then 
$\Delta(\frak{u}_\lambda \cap \frak{p}_+)=\{ \beta \in \Delta_n^+ : \beta \ge \xi\},\Delta(\frak{u}_\lambda \cap \frak{p}_-)=-\{ \beta \in \Delta_n^+ : \beta \le \phi_1 +\cdots + \phi_i \},$ 
for $\lambda= \frac{\omega_{i+1}}{\langle \omega_{i+1},\phi_{i+1}\rangle}-\frac{\omega_1}{\langle \omega_1,\phi_1\rangle}.$
Also $\Delta(\frak{u}_\lambda \cap \frak{p}_+)=\phi$ is not possible, for $\langle \lambda , \phi_{i+1} \rangle > 0,$ by Lemma \ref{lemma}(ii); and 
$\langle \lambda , \phi_1+\cdots+\phi_i+2\phi_{i+1}+\cdots+2\phi_l \rangle =0, \langle \lambda , \phi_1+\cdots+\phi_{i+1}+2\phi_{i+2}+\cdots+2\phi_l \rangle =0$. Thus $\langle \lambda , \phi_{i+1} \rangle =0$, 
a contradiction. Hence the number of equivalence classes of irreducible unitary representations with non-zero $(\frak{g}, K)$-cohomology for which 
$\Delta(\frak{u}_\lambda \cap \frak{p}_-) = -\{ \beta \in \Delta_n^+ : \beta \le \phi_1 +\cdots + \phi_i \},$ is $l - i+1$ for all $1 \le i \le l-1.$   \\ 
Let $\Delta(\frak{u}_\lambda \cap \frak{p}_-)=-\{ \beta \in \Delta_n^+ : \beta \le \phi_1 +\cdots + \phi_l \}.$ Since $\xi > \phi_1+\cdots+\phi_l, 
\xi=\phi_1+\cdots+\phi_{j-1}+2\phi_j+\cdots+2\phi_l$ for some $2 \le j \le l.$ Now $\Delta(\frak{u}_\lambda \cap \frak{p}_+)=\{ \beta \in \Delta_n^+ : \beta \ge \xi\},
\Delta(\frak{u}_\lambda \cap \frak{p}_-)=-\{ \beta \in \Delta_n^+ : \beta \le \phi_1 +\cdots + \phi_l \},$ 
for $\lambda= \frac{\omega_l}{\langle \omega_l,\phi_l \rangle}+\frac{\omega_j}{\langle \omega_j,\phi_j \rangle}-\frac{3\omega_1}{\langle \omega_1,\phi_1\rangle}.$ 
Also $\Delta(\frak{u}_\lambda \cap \frak{p}_+)=\phi, \Delta(\frak{u}_\lambda \cap \frak{p}_-)=-\{ \beta \in \Delta_n^+ : \beta \le \phi_1 +\cdots + \phi_l \},$ for 
$\lambda = \frac{\omega_l}{\langle \omega_l,\phi_l \rangle} -\frac{2\omega_1}{\langle \omega_1,\phi_1\rangle}.$ 
Thus the number of equivalence classes of irreducible unitary representations with non-zero $(\frak{g}, K)$-cohomology for which 
$\Delta(\frak{u}_\lambda \cap \frak{p}_-) = -\{ \beta \in \Delta_n^+ : \beta \le \phi_1 +\cdots + \phi_l \},$ is $l.$      \\ 
Let $\Delta(\frak{u}_\lambda \cap \frak{p}_-)=-\{ \beta \in \Delta_n^+ : \beta \le \phi_1 +\cdots + \phi_{i-1}+2\phi_i + \cdots+2\phi_l \}, 2 \le i \le l.$ 
Then $\Delta(\frak{u}_\lambda \cap \frak{p}_+)=\phi, \Delta(\frak{u}_\lambda \cap \frak{p}_-)=-\{ \beta \in \Delta_n^+ : \beta \le \phi_1 +\cdots + \phi_{i-1}+2\phi_i + \cdots+2\phi_l  \},$ for 
$\lambda = \frac{\omega_{i-1}}{\langle \omega_{i-1},\phi_{i-1} \rangle} -\frac{2\omega_1}{\langle \omega_1,\phi_1\rangle}$ if $3 \le i \le l,$ and 
$\lambda = -\omega_1$ if $i=2.$ If $\xi=\phi_1+\cdots+\phi_{j-1}+2\phi_j+\cdots+2\phi_l$ for some $2 \le j < i,$ then $\Delta(\frak{u}_\lambda \cap \frak{p}_+)=\{ \beta \in \Delta_n^+ : \beta \ge \xi\},
\Delta(\frak{u}_\lambda \cap \frak{p}_-)=-\{ \beta \in \Delta_n^+ : \beta \le \phi_1 +\cdots + \phi_{i-1}+2\phi_i + \cdots+2\phi_l \},$  
for $\lambda= \frac{\omega_{i-1}}{\langle \omega_{i-1},\phi_{i-1} \rangle}+\frac{\omega_j}{\langle \omega_j,\phi_j \rangle}-\frac{3\omega_1}{\langle \omega_1,\phi_1\rangle}.$ 
Hence the number of equivalence classes of irreducible unitary representations with non-zero $(\frak{g}, K)$-cohomology for which 
$\Delta(\frak{u}_\lambda \cap \frak{p}_-) = -\{ \beta \in \Delta_n^+ : \beta \le \phi_1 +\cdots + \phi_{i-1}+2\phi_i + \cdots+2\phi_l \},$ is $i-1$ for all $2 \le i \le l.$   \\ 
Thus the number of equivalence classes of irreducible unitary representations with non-zero $(\frak{g}, K)$-cohomology of the Lie group $SO_0(2,2l-1)(l > 1)$ is 
$A=(l+1)+l+\cdots+2+l+1+\cdots+(l-1)=3l + (l-1)l/2 + (l-1)l/2 = l(l+2).$ 

{\bf $\frak{g} = \frak{b}_1:$} In this case $\Delta_n^+ =\{\phi_1\}.$ If $\lambda = 0,$ then $\Delta(\frak{u}_\lambda \cap \frak{p}_-), \Delta(\frak{u}_\lambda \cap \frak{p}_+)$ are 
empty. $\lambda = \omega_1$ implies $\Delta(\frak{u}_\lambda \cap \frak{p}_-)=\phi, \Delta(\frak{u}_\lambda \cap \frak{p}_+)=\{\phi_1\};$ and 
$\lambda = -\omega_1$ implies $\Delta(\frak{u}_\lambda \cap \frak{p}_+)=\phi, \Delta(\frak{u}_\lambda \cap \frak{p}_-)=\{-\phi_1\}.$ Thus $A=3=l(l+2).$   

{\bf $\frak{g} = \frak{\delta}_l(l \ge 3):$} Lemma \ref{lemma}(i) and the diagram of $\Delta_n^+$ in Figure \ref{diagram} show that $\Delta(\frak{u}_\lambda \cap \frak{p}_+)$ is either empty or 
a set of the form $\{ \beta \in \Delta_n^+ : \beta \ge \xi\},$ or $\{ \beta \in \Delta_n^+ : \beta \ge \xi_1 \textrm{ or } \xi_2 \},$ and $\Delta(\frak{u}_\lambda \cap \frak{p}_-)$ is either empty or 
a set of the form $\{ -\beta \in \Delta_n^- : -\beta \ge -\eta \}= -\{ \beta \in \Delta_n^+ : \beta \le \eta \},$ or $-\{ \beta \in \Delta_n^+ : \beta \le \xi_1 \textrm{ or } \xi_2 \},$ where 
$\xi, \eta \in \Delta_n^+; \xi_1=\phi_1+\cdots+\phi_{l-2}+\phi_{l-1},\xi_2=\phi_1+\cdots+\phi_{l-2}+\phi_l.$ \\ 
Let $\Delta(\frak{u}_\lambda \cap \frak{p}_-)$ be empty. Then $\Delta(\frak{u}_\lambda \cap \frak{p}_+)=\{ \beta \in \Delta_n^+ : \beta \ge \xi\},$ where $\xi \ge \phi_1 + \phi_2 + \cdots +
\phi_l$ is not possible. For then $\xi = \phi_1+\cdots +\phi_l,$ or $\xi = \phi_1 + \cdots + \phi_{i-1} + 2\phi_i + \cdots + 2\phi_{l-2}+\phi_{l-1}+\phi_l,$ where $2 \le i \le l-2$. If $\xi = \phi_1+\cdots +\phi_l,$ then 
$\langle \lambda , \phi_{l-1} \rangle > 0, \langle \lambda , \phi_l \rangle > 0,$ by Lemma \ref{lemma}(ii). 
Again $\langle \lambda , \phi_1+\cdots+\phi_{l-1} \rangle =0, \langle \lambda , \phi_1+\cdots+\phi_{l-2}+\phi_l \rangle =0, \langle \lambda , \phi_1+\cdots+\phi_{l-2} \rangle =0$. 
Thus $\langle \lambda , \phi_{l-1} \rangle =0, \langle \lambda , \phi_l \rangle =0,$ a contradiction. If $\xi = \phi_1 + \cdots + \phi_{i-1} + 2\phi_i + \cdots + 2\phi_{l-2}+\phi_{l-1}+\phi_l (2 \le i \le l-2),$
then $\langle \lambda , \phi_i \rangle > 0,$ by Lemma \ref{lemma}(ii). 
Again $\langle \lambda , \phi_1+\phi_2+\cdots+\phi_i \rangle =0, \langle \lambda , \phi_1+\phi_2+\cdots+\phi_{i-1} \rangle =0$. Thus $\langle \lambda , \phi_i \rangle =0$, a contradiction. 
If $\xi < \phi_1 + \phi_2 + \cdots +\phi_l,$ then $\xi = \phi_1+\cdots+\phi_i,$ for some $1 \le i \le l-1,$ or $\xi = \xi_2$ and 
$\Delta(\frak{u}_\lambda \cap \frak{p}_+)=\{ \beta \in \Delta_n^+ : \beta \ge \xi\}, \Delta(\frak{u}_\lambda \cap \frak{p}_-)=\phi,$ for 
$\lambda= \omega_i, \omega_l$ respectively. Also $\Delta(\frak{u}_\lambda \cap \frak{p}_+)= \{ \beta \in \Delta_n^+ : \beta \ge \xi_1 \textrm{ or } \xi_2 \}, \Delta(\frak{u}_\lambda \cap \frak{p}_-)=\phi,$ 
for $\lambda= \omega_{l-1} +\omega_l,$ and 
$\Delta(\frak{u}_\lambda \cap \frak{p}_+)=\phi, \Delta(\frak{u}_\lambda \cap \frak{p}_-)=\phi,$ for 
$\lambda =0.$ Thus the number of equivalence classes of irreducible unitary representations with non-zero $(\frak{g}, K)$-cohomology for which 
$\Delta(\frak{u}_\lambda \cap \frak{p}_-) = \phi,$ is $l+2$.  \\
Let $\Delta(\frak{u}_\lambda \cap \frak{p}_-)=-\{ \beta \in \Delta_n^+ : \beta \le \phi_1 +\cdots + \phi_i \},$ where $1 \le i \le l-3, l \ge 4.$ Then 
$\Delta(\frak{u}_\lambda \cap \frak{p}_+)=\{ \beta \in \Delta_n^+ : \beta \ge \xi\},$ where $\xi \ge \phi_1+\cdots+\phi_l, \xi \neq \phi_1+\cdots+\phi_i+2\phi_{i+1}+\cdots+2\phi_{l-2}+\phi_{l-1}+\phi_l$ is not possible. 
For if $\xi = \phi_1+\cdots+\phi_l,$ then $\langle \lambda , \phi_{l-1} \rangle > 0, \langle \lambda , \phi_l \rangle > 0$ by Lemma \ref{lemma}(ii). Again 
$\langle \lambda , \phi_1+\phi_2+\cdots+\phi_{l-2} \rangle =0, \langle \lambda , \phi_1+\phi_2+\cdots+\phi_{l-1} \rangle =0, \langle \lambda , \phi_1+\phi_2+\cdots+\phi_{l-2}+\phi_l \rangle =0.$ Thus 
$\langle \lambda , \phi_{l-1} \rangle = 0, \langle \lambda , \phi_l \rangle = 0,$ a contradiction. 
Now  $\xi > \phi_1+\cdots+\phi_l, \xi \neq \phi_1+\cdots+\phi_i+2\phi_{i+1}+\cdots+2\phi_{l-2}+\phi_{l-1}+\phi_l$ implies 
$\xi=\phi_1+\cdots+\phi_{j-1}+2\phi_j+\cdots+2\phi_{l-2}+\phi_{l-1}+\phi_l$ for some $2 \le j \le l-2, j \neq i+1.$ Then $\langle \lambda , \phi_{i+1} \rangle > 0, \langle \lambda , \phi_j \rangle > 0,$ by Lemma \ref{lemma}(ii). 
If $2 \le j \le i,$ then $\langle \lambda , \phi_1+\cdots+\phi_i+2\phi_{i+1}+\cdots+2\phi_l \rangle =0, \langle \lambda , \phi_1+\cdots+\phi_{i+1}+2\phi_{i+2}+\cdots+2\phi_l \rangle =0$. Thus 
$\langle \lambda , \phi_{i+1} \rangle =0$, a contradiction.  
If $i+2\le j \le l-2,$ then $\langle \lambda , \phi_1+\phi_2+\cdots+\phi_j \rangle =0, \langle \lambda , \phi_1+\phi_2+\cdots+\phi_{j-1} \rangle =0$. 
Thus $\langle \lambda , \phi_j \rangle =0$, a contradiction. If $\xi < \phi_1 + \phi_2 + \cdots +\phi_l,$ then $\xi = \phi_1+\cdots+\phi_j,$ for some $i+1 \le j \le l-1,$ or $\xi = \xi_2.$
Now $\Delta(\frak{u}_\lambda \cap \frak{p}_+)=\{ \beta \in \Delta_n^+ : \beta \ge \phi_1+\cdots+\phi_j\}(i+1 \le j \le l-1),\Delta(\frak{u}_\lambda \cap \frak{p}_-)=-\{ \beta \in \Delta_n^+ : \beta \le \phi_1 +\cdots + \phi_i \}$ 
for $\lambda= \omega_{i+1}+\omega_j-\omega_1; \Delta(\frak{u}_\lambda \cap \frak{p}_+)=\{ \beta \in \Delta_n^+ : \beta \ge \xi_2 \}, 
\Delta(\frak{u}_\lambda \cap \frak{p}_-)=-\{ \beta \in \Delta_n^+ : \beta \le \phi_1 +\cdots + \phi_i \}$ 
for $\lambda= \omega_{i+1}+\omega_l-\omega_1;$ and $\Delta(\frak{u}_\lambda \cap \frak{p}_+)=\{ \beta \in \Delta_n^+ : \beta \ge \phi_1+\cdots+\phi_i+2\phi_{i+1}+\cdots+2\phi_{l-2}+\phi_{l-1}+\phi_l,\}, 
\Delta(\frak{u}_\lambda \cap \frak{p}_-)=-\{ \beta \in \Delta_n^+ : \beta \le \phi_1 +\cdots + \phi_i \}$ for $\lambda= \omega_{i+1}-\omega_1.$
Also $\Delta(\frak{u}_\lambda \cap \frak{p}_+)=\{ \beta \in \Delta_n^+ : \beta \ge \xi_1 \textrm{ or } \xi_2 \}, \Delta(\frak{u}_\lambda \cap \frak{p}_-)=-\{ \beta \in \Delta_n^+ : \beta \le \phi_1 +\cdots + \phi_i \}$ for 
$\lambda= \omega_{i+1}+\omega_{l-1}+\omega_l -\omega_1.$
Again $\Delta(\frak{u}_\lambda \cap \frak{p}_+)=\phi$ is not possible, for $\langle \lambda , \phi_{i+1} \rangle > 0,$ by Lemma \ref{lemma}(ii); and 
$\langle \lambda , \phi_1+\cdots+\phi_i+2\phi_{i+1}+\cdots+2\phi_l \rangle =0, \langle \lambda , \phi_1+\cdots+\phi_{i+1}+2\phi_{i+2}+\cdots+2\phi_l \rangle =0$. Thus $\langle \lambda , \phi_{i+1} \rangle =0$, 
a contradiction. Hence the number of equivalence classes of irreducible unitary representations with non-zero $(\frak{g}, K)$-cohomology for which 
$\Delta(\frak{u}_\lambda \cap \frak{p}_-) = -\{ \beta \in \Delta_n^+ : \beta \le \phi_1 +\cdots + \phi_i \},$ is $l - i+2$ for all $1 \le i \le l-3.$   \\ 
Let $\Delta(\frak{u}_\lambda \cap \frak{p}_-)=-\{ \beta \in \Delta_n^+ : \beta \le \phi_1 +\cdots + \phi_{l-2} \}.$ Then 
$\langle \lambda , \phi_{l-1} \rangle > 0, \langle \lambda , \phi_l \rangle > 0$ by Lemma \ref{lemma}(ii). Hence 
$\Delta(\frak{u}_\lambda \cap \frak{p}_+)= \phi,$ or $\{ \beta \in \Delta_n^+ : \beta \ge \xi\},$ where $\xi > \phi_1+\cdots+\phi_l$ is not possible. For then 
$\langle \lambda , \phi_1+\phi_2+\cdots+\phi_l \rangle =0, \langle \lambda , \phi_1+\phi_2+\cdots+\phi_{l-1} \rangle =0, \langle \lambda , \phi_1+\phi_2+\cdots+\phi_{l-2}+\phi_l \rangle =0.$ Thus 
$\langle \lambda , \phi_l \rangle = 0, \langle \lambda , \phi_{l-1} \rangle = 0,$ a contradiction. Now $\Delta(\frak{u}_\lambda \cap \frak{p}_+)=\{ \beta \in \Delta_n^+ : \beta \ge \xi_1 \}, 
\Delta(\frak{u}_\lambda \cap \frak{p}_-)=-\{ \beta \in \Delta_n^+ : \beta \le \phi_1 +\cdots + \phi_{l-2} \}$ for $\lambda= 2\omega_{l-1}+\omega_l-\omega_1; 
\Delta(\frak{u}_\lambda \cap \frak{p}_+)=\{ \beta \in \Delta_n^+ : \beta \ge \xi_2 \}, 
\Delta(\frak{u}_\lambda \cap \frak{p}_-)=-\{ \beta \in \Delta_n^+ : \beta \le \phi_1 +\cdots + \phi_{l-2} \}$ for $\lambda= \omega_{l-1}+2\omega_l-\omega_1;$ and 
$\Delta(\frak{u}_\lambda \cap \frak{p}_+)=\{ \beta \in \Delta_n^+ : \beta \ge \phi_1+\cdots+\phi_l\}, 
\Delta(\frak{u}_\lambda \cap \frak{p}_-)=-\{ \beta \in \Delta_n^+ : \beta \le \phi_1 +\cdots + \phi_{l-2} \}$ for $\lambda= \omega_{l-1}+\omega_l-\omega_1.$ 
Also $\Delta(\frak{u}_\lambda \cap \frak{p}_+)=\{ \beta \in \Delta_n^+ : \beta \ge \xi_1 \textrm{ or } \xi_2 \}, \Delta(\frak{u}_\lambda \cap \frak{p}_-)=-\{ \beta \in \Delta_n^+ : \beta \le \phi_1 +\cdots + \phi_{l-2} \}$ for 
$\lambda= 2\omega_{l-1}+2\omega_l-\omega_1.$ Hence the number of equivalence classes of irreducible unitary representations with non-zero $(\frak{g}, K)$-cohomology for which 
$\Delta(\frak{u}_\lambda \cap \frak{p}_-) = -\{ \beta \in \Delta_n^+ : \beta \le \phi_1 +\cdots + \phi_{l-2} \},$ is $4.$ \\
Let $\Delta(\frak{u}_\lambda \cap \frak{p}_-)=-\{ \beta \in \Delta_n^+ : \beta \le \phi_1+\cdots+\phi_{l-2}+\phi_a \},$ where $a=l-1,l;$ and $b=l \textrm{ if }a=l-1, \textrm{and } b=l-1 
\textrm{ if } a=l.$  
Then $\Delta(\frak{u}_\lambda \cap \frak{p}_+)=\{ \beta \in \Delta_n^+ : \beta \ge \phi_1+\cdots+\phi_{l-2}+\phi_b \}, 
\Delta(\frak{u}_\lambda \cap \frak{p}_-)=-\{ \beta \in \Delta_n^+ : \beta \le \phi_1+\cdots+\phi_{l-2}+\phi_a \}$ for $\lambda= 2\omega_b-\omega_1; 
\Delta(\frak{u}_\lambda \cap \frak{p}_+)=\{ \beta \in \Delta_n^+ : \beta \ge \phi_1+\cdots+\phi_{l-2}+\phi_{l-1}+\phi_l\}, 
\Delta(\frak{u}_\lambda \cap \frak{p}_-)=-\{ \beta \in \Delta_n^+ : \beta \le \phi_1 +\cdots + \phi_{l-2}+\phi_a \}$ for $\lambda=\omega_a + 2\omega_b-2\omega_1;
\Delta(\frak{u}_\lambda \cap \frak{p}_+)=\{ \beta \in \Delta_n^+ : \beta \ge \phi_1+\cdots+\phi_{j-1}+2\phi_j+\cdots+2\phi_{l-2}+\phi_{l-1}+\phi_l\}(2\le j \le l-2), 
\Delta(\frak{u}_\lambda \cap \frak{p}_-)=-\{ \beta \in \Delta_n^+ : \beta \le \phi_1 +\cdots + \phi_{l-2}+\phi_a \}$ for $\lambda= \omega_j+\omega_b-2\omega_1 (a=l-1,l).$ Also 
$\Delta(\frak{u}_\lambda \cap \frak{p}_+)=\phi, \Delta(\frak{u}_\lambda \cap \frak{p}_-)=-\{ \beta \in \Delta_n^+ : \beta \le \phi_1 +\cdots + \phi_{l-2}+\phi_a \}$ for 
$\lambda= \omega_b-\omega_1.$ Thus the number of equivalence classes of irreducible unitary representations with non-zero $(\frak{g}, K)$-cohomology for which 
$\Delta(\frak{u}_\lambda \cap \frak{p}_-) = -\{ \beta \in \Delta_n^+ : \beta \le \phi_1 +\cdots + \phi_{l-2}+\phi_a \}(a=l-1,l),$ is $l.$ \\    
Let $\Delta(\frak{u}_\lambda \cap \frak{p}_-)=-\{ \beta \in \Delta_n^+ : \beta \le \xi_1 \textrm{ or } \xi_2 \}.$ Then 
$\Delta(\frak{u}_\lambda \cap \frak{p}_+)=\{ \beta \in \Delta_n^+ : \beta \ge \phi_1+\cdots+\phi_{l-2}+\phi_{l-1}+\phi_l\}, 
\Delta(\frak{u}_\lambda \cap \frak{p}_-)=-\{ \beta \in \Delta_n^+ : \beta \le \xi_1 \textrm{ or } \xi_2 \}$ for $\lambda= 2\omega_{l-1}+2\omega_l-3\omega_1; $
$\Delta(\frak{u}_\lambda \cap \frak{p}_+)=\{ \beta \in \Delta_n^+ : \beta \ge \phi_1+\cdots+\phi_{j-1}+2\phi_j+\cdots+2\phi_{l-2}+\phi_{l-1}+\phi_l\}(2\le j \le l-2), 
\Delta(\frak{u}_\lambda \cap \frak{p}_-)=-\{ \beta \in \Delta_n^+ : \beta \le \xi_1 \textrm{ or } \xi_2 \}$ for $\lambda= \omega_j+\omega_{l-1}+\omega_l-3\omega_1;$ and 
$\Delta(\frak{u}_\lambda \cap \frak{p}_+)=\phi, 
\Delta(\frak{u}_\lambda \cap \frak{p}_-)=-\{ \beta \in \Delta_n^+ : \beta \le \xi_1 \textrm{ or } \xi_2 \}$ for $\lambda= \omega_{l-1}+\omega_l-2\omega_1.$ 
 Thus the number of equivalence classes of irreducible unitary representations with non-zero $(\frak{g}, K)$-cohomology for which 
$\Delta(\frak{u}_\lambda \cap \frak{p}_-) = -\{\beta \in \Delta_n^+ : \beta \le \xi_1 \textrm{ or } \xi_2 \}$ is $l-1.$ \\ 
Let $\Delta(\frak{u}_\lambda \cap \frak{p}_-)=-\{ \beta \in \Delta_n^+ : \beta \le \phi_1 +\cdots + \phi_{l-2}+\phi_{l-1}+\phi_l \}.$ Then 
$\Delta(\frak{u}_\lambda \cap \frak{p}_+)=\{ \beta \in \Delta_n^+ : \beta \ge \phi_1+\cdots+\phi_{j-1}+2\phi_j+\cdots+2\phi_{l-2}+\phi_{l-1}+\phi_l\}(2\le j \le l-2), 
\Delta(\frak{u}_\lambda \cap \frak{p}_-)=-\{ \beta \in \Delta_n^+ : \beta \le \phi_1 +\cdots + \phi_{l-2}+\phi_{l-1}+\phi_l \}$ for $\lambda= \omega_j+\omega_{l-2}-3\omega_1; 
\Delta(\frak{u}_\lambda \cap \frak{p}_+)=\phi,
\Delta(\frak{u}_\lambda \cap \frak{p}_-)=-\{ \beta \in \Delta_n^+ : \beta \le \phi_1 +\cdots + \phi_{l-2}+\phi_{l-1}+\phi_l \}$ for $\lambda= \omega_{l-2}-2\omega_1.$ Thus 
the number of equivalence classes of irreducible unitary representations with non-zero $(\frak{g}, K)$-cohomology for which 
$\Delta(\frak{u}_\lambda \cap \frak{p}_-) = -\{\beta \in \Delta_n^+ : \beta \le \phi_1 +\cdots + \phi_{l-2}+\phi_{l-1}+\phi_l \},$ is $l-2.$ \\ 
Let $\Delta(\frak{u}_\lambda \cap \frak{p}_-)=-\{ \beta \in \Delta_n^+ : \beta \le \phi_1+\cdots+\phi_{i-1}+2\phi_i+\cdots+2\phi_{l-2}+\phi_{l-1}+\phi_l\}(2\le i \le l-2), l \ge 4.$ Then 
$\Delta(\frak{u}_\lambda \cap \frak{p}_+)=\phi, 
\Delta(\frak{u}_\lambda \cap \frak{p}_-)=-\{ \beta \in \Delta_n^+ : \beta \le \phi_1+\cdots+\phi_{i-1}+2\phi_i+\cdots+2\phi_{l-2}+\phi_{l-1}+\phi_l\}$ for 
$\lambda= \omega_{i-1}-2\omega_1$ if $3\le i \le l-2,$ and $\lambda = -\omega_1$ if $i=2.$ Also 
$\Delta(\frak{u}_\lambda \cap \frak{p}_+)=\{ \beta \in \Delta_n^+ : \beta \ge \phi_1+\cdots+\phi_{j-1}+2\phi_j+\cdots+2\phi_{l-2}+\phi_{l-1}+\phi_l\}(2\le j \le i-1), 
\Delta(\frak{u}_\lambda \cap \frak{p}_-)=-\{ \beta \in \Delta_n^+ : \beta \le \phi_1+\cdots+\phi_{i-1}+2\phi_i+\cdots+2\phi_{l-2}+\phi_{l-1}+\phi_l\}$ for 
$\lambda= \omega_{i-1}+\omega_j-3\omega_1.$ Thus 
the number of equivalence classes of irreducible unitary representations with non-zero $(\frak{g}, K)$-cohomology for which 
$\Delta(\frak{u}_\lambda \cap \frak{p}_-) = -\{\beta \in \Delta_n^+ : \beta \le \phi_1+\cdots+\phi_{i-1}+2\phi_i+\cdots+2\phi_{l-2}+\phi_{l-1}+\phi_l\}(2\le i \le l-2),$ is $i-1.$ \\ 
Hence $A=(l+2)+(l+1)+\cdots+5+4+l+l+(l-1)+(l-2)+(l-3)+\cdots+1=l^2+4l-3.$

{\bf $\frak{g} = \frak{\delta}_2:$} In this case $\Delta_n^+ =\{\phi_1,\phi_2\}.$ If $\lambda = 0,$ then $\Delta(\frak{u}_\lambda \cap \frak{p}_-), 
\Delta(\frak{u}_\lambda \cap \frak{p}_+)$ are empty. 
$\lambda = \omega_i(1\le i \le2)$ implies $\Delta(\frak{u}_\lambda \cap \frak{p}_-)=\phi, \Delta(\frak{u}_\lambda \cap \frak{p}_+)=\{\phi_i\}; 
\lambda = \omega_1+\omega_2$ implies $\Delta(\frak{u}_\lambda \cap \frak{p}_-)=\phi, \Delta(\frak{u}_\lambda \cap \frak{p}_+)=\{\phi_1,\phi_2\}; 
\lambda = -\omega_i(1\le i \le2)$ implies $\Delta(\frak{u}_\lambda \cap \frak{p}_-)=\{-\phi_i\}, \Delta(\frak{u}_\lambda \cap \frak{p}_+)=\phi; 
\lambda = \omega_1-\omega_2$ implies $\Delta(\frak{u}_\lambda \cap \frak{p}_-)=\{-\phi_2\}, \Delta(\frak{u}_\lambda \cap \frak{p}_+)=\{\phi_1\}; 
\lambda = \omega_2-\omega_1$ implies $\Delta(\frak{u}_\lambda \cap \frak{p}_-)=\{-\phi_1\}, \Delta(\frak{u}_\lambda \cap \frak{p}_+)=\{\phi_2\};$ and 
$\lambda = -\omega_1-\omega_2$ implies $\Delta(\frak{u}_\lambda \cap \frak{p}_+)=\phi, \Delta(\frak{u}_\lambda \cap \frak{p}_-)=\{-\phi_1,-\phi_2\}.$ Thus $A=9=l^2+4l-3.$  

(ii) An irreducible unitary representation $\pi$ of $SO_0(2,m)$ with trivial infinitesimal character is a discrete series representation {\it if and only if} $\pi$ is unitarily equivalent to 
$A_\frak{b},$ where $\frak{b}$ is a Borel subalgebra of $\frak{g}$ containing $\frak{h} + \sum_{\alpha \in \Delta_\frak{k}^+} \frak{g}^\alpha.$ 
If $\frak{b}$ is a Borel subalgebra of $\frak{g}$ containing $\frak{h} + \sum_{\alpha \in \Delta_\frak{k}^+} \frak{g}^\alpha,$ then 
$\frak{b}=\frak{b}_\lambda=\frak{h} \oplus \frak{u}_\lambda$ 
for some linear function $\lambda$ on $\frak{h}_\mathbb{R}$ with $\langle \lambda,\alpha \rangle \neq 0$ for all $\alpha \in \Delta,$ and 
$\langle \lambda,\alpha \rangle >  0$ for all $\alpha \in \Delta_\frak{k}^+.$ Since $\langle \lambda,\beta \rangle \neq 0$ for all $\beta \in \Delta_n^+,$ 
for the irreducible unitary representation $A_\frak{b}$ we have 
$(-\Delta(\frak{u}_\lambda \cap \frak{p}_-)) \cup \Delta(\frak{u}_\lambda \cap \frak{p}_+) = \Delta_n^+.$ 

  Conversely suppose that $\lambda$ be a linear function on $\frak{h}_\mathbb{R}$ such that $\langle \lambda,\alpha \rangle \ge  0$ for all $\alpha \in \Delta_\frak{k}^+,$  and 
$(-\Delta(\frak{u}_\lambda \cap \frak{p}_-)) \cup \Delta(\frak{u}_\lambda \cap \frak{p}_+) = \Delta_n^+.$ Since $\langle \lambda,\alpha \rangle \ge  0$ for all 
$\alpha \in \Delta_\frak{k}^+,$ and $\langle \lambda,\beta \rangle \neq 0$ for all $\beta \in \Delta_n, 
\lambda=\sum_{1\le i\le l} \frac{c_i\omega_i}{\langle \omega_i,\phi_i \rangle},$ where $c_1$ is a non-zero real number and $c_i$ is a non-negative real number for all $2 \le i \le l.$ 
If $c_1 >0,$ let $d_1=c_1;$ and $d_i =c_i \textrm{ if } c_i \neq 0,\textrm{and }d_i =1 \textrm{ if } c_i = 0;$ for all $2\le i \le l.$ 
If $c_1 <0,$ let $\{i: 2\le i \le l, c_i = 0\}=\{i_1,i_2, \ldots , i_k\},$ and 
$m_j =\textrm{max} \{ n_{\phi_{i_j}}(\beta): \beta \in \Delta_n^+, \langle \lambda,\beta \rangle < 0\}$ for all $1\le j \le k.$ 
Assume that $\frak{g} \neq \delta_2,$ and if $\frak{g}=\delta_l (l \ge 3),$ 
$-\Delta(\frak{u}_\lambda \cap \frak{p}_-) \neq \{\beta \in \Delta_n^+ : \beta \le \xi_i\},$ where $i=1,2.$ Then if $\beta \in -\Delta(\frak{u}_\lambda \cap \frak{p}_-), 
\beta' \in \Delta(\frak{u}_\lambda \cap \frak{p}_+); \beta < \beta',$ and so $n_{\phi_{i_j}}(\beta) \le n_{\phi_{i_j}}(\beta')$ for all $1 \le j \le k.$ 
In this case, if $c_1<0,$ let $d_1=c_1-\sum_{1\le j \le k}m_j;$ and $d_i =c_i \textrm{ if } c_i \neq 0,\textrm{and }d_i =1 \textrm{ if } c_i = 0;$ for all $2\le i \le l.$ Let 
$\lambda'=\sum_{1\le i\le l} \frac{d_i\omega_i}{\langle \omega_i,\phi_i \rangle},$ for any $c_1 \in \mathbb{R}\setminus \{0\}.$ 
Then $\langle \lambda',\alpha \rangle >  0$ for all $\alpha \in \Delta_\frak{k}^+;$ and 
if $\beta \in \Delta_n,\ \langle \lambda,\beta \rangle < 0 \implies \langle \lambda',\beta \rangle < 0,$ and $\langle \lambda,\beta \rangle > 0 \implies 
\langle \lambda',\beta \rangle > 0.$ Thus $\frak{q}_{\lambda'}$ is a Borel subalgebra of $\frak{g}$ containing $\frak{h} + \sum_{\alpha \in \Delta_\frak{k}^+} \frak{g}^\alpha,$ and 
$\Delta(\frak{u}_{\lambda'} \cap \frak{p}_-) = \Delta(\frak{u}_\lambda \cap \frak{p}_-), \Delta(\frak{u}_{\lambda'} \cap \frak{p}_+)=\Delta(\frak{u}_\lambda \cap \frak{p}_+).$ 
Thus $A_{\frak{q}_\lambda}$ is is unitarily equivalent to $A_{\frak{q}_{\lambda'}},$ which is a discrete series representation with trivial infinitesimal character. \\ 
Let $\frak{g}=\delta_l (l \ge 3),$ and $-\Delta(\frak{u}_\lambda \cap \frak{p}_-) = \{\beta \in \Delta_n^+ : \beta \le \xi_i\},$ where $i=1,2;$ that is 
$-\Delta(\frak{u}_\lambda \cap \frak{p}_-) = \{\beta \in \Delta_n^+ : \beta \le \phi_1 +\cdots + \phi_{l-2}+\phi_a \}(a=l-1,l).$ Then 
$\Delta(\frak{u}_\lambda \cap \frak{p}_+) = \{\beta \in \Delta_n^+ : \beta \ge \phi_1 +\cdots + \phi_{l-2}+\phi_b \},$ where $b=l \textrm{ if }a=l-1, \textrm{and } b=l-1 \textrm{ if }a=l.$ 
Clearly $\langle \lambda , \phi_b \rangle > 0,$ that is $c_b>0.$ If $c_a=0,$ let $d_1=c_1-\sum_{1\le j \le k}m_j; d_a =c_a+1; d_b=c_b+1;$ 
and for all $2\le i \le l-2,\ d_i =c_i \textrm{ if } c_i \neq 0,\textrm{and }d_i =1 \textrm{ if } c_i = 0.$ If $c_a \neq 0,$ 
let $d_1=c_1-\sum_{1\le j \le k}m_j;$ and $d_i =c_i \textrm{ if } c_i \neq 0,\textrm{and }d_i =1 \textrm{ if } c_i = 0;$ for all $2\le i \le l.$ 
$\lambda'=\sum_{1\le i\le l} \frac{d_i\omega_i}{\langle \omega_i,\phi_i \rangle}.$ Since 
$\beta \in -\Delta(\frak{u}_\lambda \cap \frak{p}_-), \beta' \in \Delta(\frak{u}_\lambda \cap \frak{p}_+) \implies n_{\phi_{i_j}}(\beta) \le n_{\phi_{i_j}}(\beta')$ for all $1 \le j \le k,i_j \neq l-1,l;$ 
we have $\langle \lambda',\alpha \rangle >  0$ for all $\alpha \in \Delta_\frak{k}^+;$ and 
if $\beta \in \Delta_n,\ \langle \lambda,\beta \rangle < 0 \implies \langle \lambda',\beta \rangle < 0,$ and $\langle \lambda,\beta \rangle > 0 \implies \langle \lambda',\beta \rangle > 0.$ 
As above $A_{\frak{q}_\lambda}$ is is unitarily equivalent to $A_{\frak{q}_{\lambda'}},$ which is a discrete series representation with trivial infinitesimal character. \\ 
Let $\frak{g}=\frak{\delta}_2.$ Since $(-\Delta(\frak{u}_\lambda \cap \frak{p}_-)) \cup \Delta(\frak{u}_\lambda \cap \frak{p}_+) = \Delta_n^+,$ the candidates of 
$(-\Delta(\frak{u}_\lambda \cap \frak{p}_-), \Delta(\frak{u}_\lambda \cap \frak{p}_+))$ are $(\phi, \{\phi_1,\phi_2\}), (\{\phi_1\},\{\phi_2\}),(\{\phi_2\},\{\phi_1\}),$ and 
$(\{\phi_1,\phi_2\},\phi).$ The corresponding $\lambda'$ are $\omega_1+\omega_2,-\omega_1+\omega_2,\omega_1-\omega_2,-\omega_1-\omega_2$ resectively. Then 
$A_{\frak{q}_\lambda}$ is is unitarily equivalent to $A_{\frak{q}_{\lambda'}},$ which is a discrete series representation with trivial infinitesimal character. 

  The Blattner parameter of the discrete series representation $A_{\frak{b}_\lambda},$ where $\frak{b}_\lambda = \frak{h} \oplus \frak{u}_\lambda$ 
for some linear function $\lambda$ on $\frak{h}_\mathbb{R}$ with $\langle \lambda,\alpha \rangle \neq 0$ for all $\alpha \in \Delta,$ and 
$\langle \lambda,\alpha \rangle >  0$ for all $\alpha \in \Delta_\frak{k}^+,$ is $\sum_{\beta \in \Delta(\frak{u}_\lambda \cap \frak{p}_-) \cup \Delta(\frak{u}_\lambda \cap \frak{p}_+)} \beta.$ 
If $m \neq 2, \frak{g}$ is simple and in this case the discrete series representation $A_{\frak{b}_\lambda}$ is a holomorphic discrete series representation {\it if and only if} the  Blattner parameter is 
$\sum_{\beta \in \Delta_n^+} \beta,$ or $\sum_{\beta \in \Delta_n^-} \beta;$ that is $\Delta(\frak{u}_\lambda \cap \frak{p}_+)$ is either $\Delta_n^+$ or empty. 
For since $\frak{g}$ is simple, the only Borel-de Siebenthal positive root system containing $\Delta_\frak{k}^+$ are $\Delta_\frak{k}^+ \cup \Delta_n^+,$ and $\Delta_\frak{k}^+ \cup \Delta_n^-.$
Hence the number of equivalence classes of holomorphic discrete series representations of $SO_0(2,m)(m \neq 2)$ with trivial infinitesimal character is $2.$ If $\frak{g}=\frak{\delta}_2,$ any 
positive root system is Borel-de Siebenthal positive root system, and so any discrete series representation with trivial infinitesimal character is holomorphic. 

  Let $\frak{g}=\frak{b}_l(l \ge 2).$ The candidates of $(\Delta(\frak{u}_\lambda \cap \frak{p}_-), \Delta(\frak{u}_\lambda \cap \frak{p}_+))$ for which 
$(-\Delta(\frak{u}_\lambda \cap \frak{p}_-)) \cup \Delta(\frak{u}_\lambda \cap \frak{p}_+) = \Delta_n^+,$ are 
$(\phi ,\{ \beta \in \Delta_n^+ : \beta \ge \phi_1\}), (-\{ \beta \in \Delta_n^+ : \beta \le \phi_1 +\cdots + \phi_i \}, \{ \beta \in \Delta_n^+ : \beta \ge \phi_1 +\cdots + \phi_{i+1}\})
(1\le i \le l-1), (-\{ \beta \in \Delta_n^+ : \beta \le \phi_1 +\cdots + \phi_l \}, \{ \beta \in \Delta_n^+ : \beta \ge \phi_1 +\cdots + \phi_{l-1}+2\phi_l \}), 
(-\{ \beta \in \Delta_n^+ : \beta \le \phi_1 +\cdots + \phi_{i-1}+2\phi_i + \cdots+2\phi_l \}, \{ \beta \in \Delta_n^+ : \beta \ge \phi_1 +\cdots + \phi_{i-2}+2\phi_{i-1} + \cdots+2\phi_l \})
(3 \le i \le l), (-\{ \beta \in \Delta_n^+ : \beta \le \phi_1+2\phi_2 + \cdots+2\phi_l \}, \phi ).$ Thus the number of equivalence classes of discrete series representations of $SO_0(2,m)$ 
with trivial infinitesimal character is $2l \textrm{ if }m=2l-1, l \ge 2.$   \\ 
If $\frak{g}=\frak{b}_1,$ the candidates of $(\Delta(\frak{u}_\lambda \cap \frak{p}_-), \Delta(\frak{u}_\lambda \cap \frak{p}_+))$ for which 
$(-\Delta(\frak{u}_\lambda \cap \frak{p}_-)) \cup \Delta(\frak{u}_\lambda \cap \frak{p}_+) = \Delta_n^+,$ are $(\phi ,\{\phi_1\}), (\{-\phi_1\},\phi)$. Thus the number is $2.$ \\
Let $\frak{g}=\frak{\delta}_l(l \ge 3).$ The candidates of $(\Delta(\frak{u}_\lambda \cap \frak{p}_-), \Delta(\frak{u}_\lambda \cap \frak{p}_+))$ for which 
$(-\Delta(\frak{u}_\lambda \cap \frak{p}_-)) \cup \Delta(\frak{u}_\lambda \cap \frak{p}_+) = \Delta_n^+,$ are 
$(\phi ,\{ \beta \in \Delta_n^+ : \beta \ge \phi_1\}), (-\{ \beta \in \Delta_n^+ : \beta \le \phi_1 +\cdots + \phi_i \}, \{ \beta \in \Delta_n^+ : \beta \ge \phi_1 +\cdots + \phi_{i+1}\})
(1\le i \le l-3,l\ge 4), (-\{ \beta \in \Delta_n^+ : \beta \le \phi_1 +\cdots + \phi_{l-2} \}, \{ \beta \in \Delta_n^+ : \beta \ge \xi_1 \textrm{ or } \xi_2\}), 
(-\{ \beta \in \Delta_n^+ : \beta \le \xi_1 \}, \{ \beta \in \Delta_n^+ : \beta \ge \xi_2\}), (-\{ \beta \in \Delta_n^+ : \beta \le \xi_2 \}, \{ \beta \in \Delta_n^+ : \beta \ge \xi_1\}), 
(-\{ \beta \in \Delta_n^+ : \beta \le \xi_1 \textrm{ or } \xi_2\}, \{ \beta \in \Delta_n^+ : \beta \ge \phi_1 +\cdots + \phi_l \}),  
(-\{ \beta \in \Delta_n^+ : \beta \le \phi_1 +\cdots + \phi_l \}, \{ \beta \in \Delta_n^+ : \beta \ge \phi_1 +\cdots +\phi_{l-3}+2\phi_{l-2}+ \phi_{l-1}+\phi_l \}), 
(-\{ \beta \in \Delta_n^+ : \beta \le \phi_1 +\cdots + \phi_{i-1}+2\phi_i + \cdots+2\phi_{l-2}+\phi_{l-1}+\phi_l \}, 
\{ \beta \in \Delta_n^+ : \beta \ge \phi_1 +\cdots + \phi_{i-2}+2\phi_{i-1} + \cdots+2\phi_{l-2}+\phi_{l-1}+\phi_l \})
(3 \le i \le l-2,l\ge 4), (-\{ \beta \in \Delta_n^+ : \beta \le \phi_1+2\phi_2 + \cdots+2\phi_{l-2}+\phi_{l-1}+\phi_l \}, \phi ).$ 
Thus the number of equivalence classes of discrete series representations of $SO_0(2,m)$ with trivial infinitesimal character is $2l \textrm{ if }m=2l-2, l \ge 3.$   \\  
If $\frak{g}=\frak{\delta}_2,$ then obviously the number is $4.$

\begin{remark} 
Note that the set of all Hodge types $(R_+(\frak{q}), R_-(\frak{q}))$ of irreducible unitary representations $A_\frak{q}$ is given by 
\[\
\begin{cases}
\{(i,j): i,j \in \mathbb{N}\cup \{0\}, l \le i+j \le |\Delta_n^+|\}\cup\{(i,i): 0 \le i \le [\frac{|\Delta_n^+|}{2}]\} & \textrm{if } \frak{g}=\frak{b}_l, \\
\{(i,j): i,j \in \mathbb{N}\cup \{0\}, l-1 \le i+j \le |\Delta_n^+|\}\cup\{(i,i): 0 \le i \le [\frac{|\Delta_n^+|}{2}]\} & \textrm{if } \frak{g}=\frak{\delta}_l; \\
\end{cases}
\] where $|\Delta_n^+|$ is the number of roots in $\Delta_n^+.$ 
The discrete series representations with trivial infinitesimal character correspond to the set $\{(i,j): i,j \in \mathbb{N}\cup \{0\}, i+j = |\Delta_n^+|\}.$  
\end{remark}

\noindent
\section{Poincar\'{e} polynomials of cohomologies of the irreducible unitary representations with non-zero $(\frak{g},K)$-cohomology of the Lie group $SO_0(2,m)$}

 To determine the Poincar\'{e} polynomial of $H^*(\frak{g},K;A_{\frak{q},K}),$ we need to determine the spaces $Y_\frak{q},$ and to do so 
we need to analyze the $\theta$-stable parabolic 
subalgebras 
containing $\frak{h} \oplus \sum_{\alpha \in \Delta_\frak{k}^+}\frak{g}^{\alpha}$ 
more closely. 
Note that a parabolic subalgebra of $\frak{g}$ is $\theta$-stable {\it if and only if} it contains a maximal abelian subspace of $\frak{k}_0.$ Thus a parabolic $\frak{q}$ 
subalgebra of $\frak{g}$ which contains $\frak{h} \oplus \sum_{\alpha \in \Delta_\frak{k}^+}\frak{g}^{\alpha},$ is $\theta$-stable. So there exists a positive root system 
$\Delta_\frak{q}^+$ of $\Delta$ containing $\Delta_\frak{k}^+$ and a subset $\Gamma$ of $\Phi_\frak{q},$ the set of all simple roots in $\Delta_\frak{q}^+,$ such that 
$\frak{q}= \frak{l} \oplus \frak{u},$
where $\frak{l} = \frak{h} \oplus \sum_{n_\phi (\alpha) =0 \textrm{ for all } \phi \in \Gamma} \frak{g}^\alpha$ is the Levi subalgebra of $\frak{q},$ and 
$\frak{u} = \sum_{n_\phi (\alpha) >0 \textrm{ for some } \phi \in \Gamma} \frak{g}^\alpha$ is the nilradical of $\frak{q};$ where  
$\alpha =\sum_{\phi \in \Phi_\frak{q}}  n_\phi (\alpha) \phi \in \Delta.$ Note that the Levi subalgebra $\frak{l}$ is the direct sum of an $|\Gamma|$-dimensional centre and 
a semisimple Lie algebra whose Dynkin diagram is the subdiagram of the dynkin diagram of $\frak{g}$ consisting of the vertices $\Phi_\frak{q} \setminus \Gamma.$ 

If $\frak{q}$ is a parabolic subalgebra which contains $\frak{h} \oplus \sum_{\alpha \in \Delta_\frak{k}^+}\frak{g}^{\alpha},$ there are many positive root systems of $\Delta$ containing 
$\Delta_\frak{k}^+ \cup \Delta(\frak{u} \cap \frak{p}_-) \cup  \Delta(\frak{u} \cap \frak{p}_+).$ For example, $\Delta_\frak{k}^+ \cup \Delta(\frak{u} \cap \frak{p}_-) \cup 
(\Delta_n^+\setminus (-\Delta(\frak{u} \cap \frak{p}_-)))$ is a positive root system of $\Delta,$ as we have seen in the proof of Th. \ref{th1}(ii) that there exists a non-singular linear 
function $\lambda'$ on $\frak{h}_\mathbb{R}$ such that $\lambda'$ is dominant with respect to $\Delta_\frak{k}^+ \cup \Delta(\frak{u} \cap \frak{p}_-) \cup 
(\Delta_n^+\setminus (-\Delta(\frak{u} \cap \frak{p}_-))).$ We define $\Delta_\frak{q}^+= \Delta_\frak{k}^+ \cup \Delta(\frak{u} \cap \frak{p}_-) \cup 
(\Delta_n^+\setminus (-\Delta(\frak{u} \cap \frak{p}_-))).$ In the Tables \ref{b-table} and \ref{d-table}, we have determined $\Phi_\frak{q}, \Gamma, Y_\frak{q}$ for each 
$\theta$-stable parabolic subalgebra containing $\frak{h} \oplus \sum_{\alpha \in \Delta_\frak{k}^+}\frak{g}^{\alpha}.$ 

In the Tables \ref{b-table} and \ref{d-table}, we can see that $Y_\frak{q}$ is either singleton, or $\frac{SU(k)}{S(U(1)\times U(k-1))}(k \ge 2),$ or $\frac{SO(2k+1)}{SO(2)\times SO(2k-1)}
(k \ge 1),$ or $\frac{SO(2k)}{SO(2)\times SO(2k-2)}(k \ge 2).$ We have 
$P(\textrm{singleton},t)=1,$ $P(\frac{SU(k)}{S(U(1)\times U(k-1))},t)=1+t^2+t^4+\cdots+t^{2k-2} \textrm{ for all }k \ge 2, 
P(\frac{SO(2k+1)}{SO(2)\times SO(2k-1)},t)=1+t^2+t^4+\cdots+t^{4k-2} \textrm{ for all }k \ge 1, 
P(\frac{SO(2k)}{SO(2)\times SO(2k-2)},t)=1+t^2+t^4+\cdots+t^{2k-4}+2t^{2k-2}+t^{2k}+\cdots+t^{4k-4} \textrm{ for all }k \ge 2.$ See \cite{ghv}. Since 
$H^r (\frak{g}, K; A_{\frak{q}, K}) =  H^{p,q} (\frak{g}, K; A_{\frak{q}, K})
\cong H^{p-R_+(\frak{q}),q-R_-(\frak{q})} (Y_\frak{q} ;\mathbb{C}),$ for unique non-negative integers $p,q$ with 
$p+q=r, p-q=R_+(\frak{q})-R_-(\frak{q});$ 
we write two variable Poincar\'{e} polynomial $P_\frak{q}(x,t)$ for $H^*(\frak{g},K;A_{\frak{q},K}),$ and the coefficient of the term $x^pt^q$ in $P_\frak{q}(x,t)$ is 
dim$( H^{p,q} (\frak{g}, K; A_{\frak{q}, K})).$

\begin{landscape} 
\begin{table} 
\caption{Poincar\'{e} Polynomials of The Irreducible Unitary Representations $A_\frak{q}$ of $SO_0(2,2l-1)(l \ge 1):$}\label{b-table}
\begin{tabular}{||c|c|c|c|c|c||}
\hline
\begin{tabular}{c} $\Delta(\frak{u} \cap \frak{p}_-),$\\$R_-(\frak{q})$ \end{tabular} & $\Phi_\frak{q}$ & 
\begin{tabular}{c} $\Delta(\frak{u} \cap \frak{p}_+),$\\$R_+(\frak{q})$ \end{tabular} & $\Gamma$ & $Y_\frak{q}$ & $P_\frak{q}(x,t)$ \\
\hline
\hline
\begin{tabular}{c} empty, \\$0$ \end{tabular} & 
\begin{tabular}{c} $\{\nu, \phi_2, \ldots ,\phi_l\};$\\$\nu=\phi_1.$\\ See (a).  \end{tabular} & 
\begin{tabular}{c} $\{\beta \in \Delta_n^+ :\beta \ge $\\$ \nu\},2l-1$\\ \\ 
$\{\beta \in \Delta_n^+ : \beta \ge $\\$\phi_1+\cdots $\\$+\phi_i\},2l-i$\\$(2 \le i \le l)$\\ \\empty, $0$ \end{tabular} & 
\begin{tabular}{c} $\{\nu\}$\\ \\$\{\phi_i\}$\\$(2 \le i \le l)$\\ \\ empty \end{tabular} & 
\begin{tabular}{c} singleton \\ \\$\frac{SU(i)}{S(U(1)\times U(i-1))}$\\$(2 \le i \le l)$\\  \\$\frac{SO(2l+1)}{SO(2)\times SO(2l-1)}$ \end{tabular} & 
\begin{tabular}{c} $x^{2l-1}$\\ \\$x^{2l-i}+x^{2l-i+1}t$\\$+\cdots+x^{2l-1}t^{i-1}$\\$(2 \le i \le l)$\\ \\$1+xt+\cdots$\\$+x^{2l-1}t^{2l-1}$   \end{tabular} \\ 
\hline 
\begin{tabular}{c} $-\{\beta \in \Delta_n^+ : \beta \le $\\$\phi_1+\cdots +\phi_i\},$\\$i$\\$(1 \le i \le l-1)$ \end{tabular} & 
\begin{tabular}{c} $\{\nu_1,\nu_2,\phi_2,\ldots ,$\\$\phi_i,\phi_{i+2},\ldots , \phi_l\};\nu_1$\\$=-(\phi_1+\cdots +\phi_i),$\\$\nu_2=\phi_1+\cdots+\phi_{i+1}.$\\ See (b). \end{tabular} & 
\begin{tabular}{c} $\{\beta \in \Delta_n^+ : \beta \ge $\\$\nu_2\},2l-1-i$\\ \\$\{\beta \in \Delta_n^+ : \beta \ge $\\$\phi_1+\cdots $\\$+\phi_j\},2l-j$\\$(i+2 \le j \le l)$\\ \\
$\{\beta \in \Delta_n^+ : \beta \ge $\\$\phi_1+\cdots +\phi_i+$\\$2\phi_{i+1}+\cdots $\\$+2\phi_l\},i$ \end{tabular} & 
\begin{tabular}{c} $\{\nu_1,\nu_2\}$\\ \\$\{\nu_1,\phi_j\}$\\$(i+2\le j \le l)$\\ \\$\{\nu_1\}$ \end{tabular} & 
\begin{tabular}{c} singleton \\ \\ $\frac{SU(j-i)}{S(U(1)\times U(j-i-1))}$\\$(i+2 \le j \le l)$\\ \\$\frac{SO(2l-2i+1)}{SO(2)\times SO(2l-2i-1)}$ \end{tabular} & 
\begin{tabular}{c} $x^{2l-1-i}t^i$\\ \\$x^{2l-j}t^i$\\$+x^{2l-j+1}t^{i+1}+$\\$\cdots+x^{2l-i-1}t^{j-1}$\\$(i+2 \le j \le l)$\\ \\$x^it^i$\\$+x^{i+1}t^{i+1}+\cdots$\\$+x^{2l-i-1}t^{2l-i-1}$   \end{tabular} \\  
\hline 
\begin{tabular}{c} $-\{\beta \in \Delta_n^+ : \beta \le $\\$\phi_1+\cdots +\phi_l\},$\\$l$ \end{tabular} & 
\begin{tabular}{c} $\{\nu_1,\nu_2,\phi_2,$\\$\ldots ,\phi_{l-1}\};\nu_1$\\$= -(\phi_1+\cdots +\phi_l),$\\$\nu_2=\phi_1+\cdots$\\$+\phi_{l-1}+2\phi_l.$\\ See (c). \end{tabular} & 
\begin{tabular}{c} $\{\beta \in \Delta_n^+ :\beta \ge $\\$\nu_2\},l-1$\\ \\
$\{\beta \in \Delta_n^+ : \beta \ge $\\$\phi_1+\cdots +\phi_{j-1}$\\$+2\phi_j+\cdots$\\$+2\phi_l\},j-1$\\$(2 \le j \le l-1)$\\ \\ empty, $0$ \end{tabular} & 
\begin{tabular}{c} $\{\nu_1,\nu_2\}$\\ \\$\{\nu_1,\phi_j\}$\\$(2\le j \le l-1)$\\ \\$\{\nu_1\}$  \end{tabular} & 
\begin{tabular}{c} singleton \\ \\ $\frac{SU(l-j+1)}{S(U(1)\times U(l-j))}$\\$(2 \le j \le l-1)$\\ \\$\frac{SU(l)}{S(U(1)\times U(l-1))}$  \end{tabular} & 
\begin{tabular}{c} $x^{l-1}t^l$\\ \\$x^{j-1}t^l$\\$+x^{j}t^{l+1}+$\\$\cdots+x^{l-1}t^{2l-j}$\\$(2 \le j \le l-1)$\\ \\$t^l$\\$+xt^{l+1}+\cdots$\\$+x^{l-1}t^{2l-1}$   \end{tabular} \\  
\hline
\end{tabular} 
\end{table} 
\end{landscape}

\begin{landscape} 
\begin{table} 
\begin{tabular}{||c|c|c|c|c|c||}
\hline
\begin{tabular}{c} $\Delta(\frak{u} \cap \frak{p}_-),$\\$R_-(\frak{q})$ \end{tabular} & $\Phi_\frak{q}$ & 
\begin{tabular}{c} $\Delta(\frak{u} \cap \frak{p}_+),$\\$R_+(\frak{q})$ \end{tabular} & $\Gamma$ & $Y_\frak{q}$ & $P_\frak{q}(x,t)$ \\
\hline
\hline
\begin{tabular}{c} $-\{\beta \in \Delta_n^+ : \beta \le $\\$\phi_1+\cdots +\phi_{i-1}$\\$+2\phi_i+\cdots+2\phi_l\},$\\$2l-i+1$\\$(3\le i \le l)$ \end{tabular} & 
\begin{tabular}{c} $\{\nu_1,\nu_2,\phi_2, \ldots,$\\$\phi_{i-2},\phi_i,\ldots ,\phi_l\};$\\
$\nu_1=-(\phi_1+\cdots +\phi_{i-1}$\\$+2\phi_i+\cdots+2\phi_l),\nu_2$\\$= \phi_1+\cdots+\phi_{i-2}$\\$+2\phi_{i-1}+\cdots+2\phi_l.$\\ See (d). \end{tabular} & 
\begin{tabular}{c} $\{\beta \in \Delta_n^+ : \beta \ge $\\$\nu_2\},i-2$\\ \\
$\{\beta \in \Delta_n^+ : \beta \ge $\\$\phi_1+\cdots +$\\$\phi_{j-1}+2\phi_j+\cdots$\\$+2\phi_l\},j-1$\\$(2 \le j \le i-2)$\\ \\ empty, $0$ \end{tabular} & 
\begin{tabular}{c} $\{\nu_1,\nu_2\}$\\ \\$\{\nu_1,\phi_j\}$\\$(2\le j \le i-2)$\\ \\$\{\nu_1\}$ \end{tabular} & 
\begin{tabular}{c} singleton \\ \\$\frac{SU(i-j)}{S(U(1)\times U(i-j-1))}$\\$(2 \le j \le i-2)$\\ \\$\frac{SU(i-1)}{S(U(1)\times U(i-2))}$  \end{tabular} & 
\begin{tabular}{c} $x^{i-2}t^{2l-i+1}$\\ \\$x^{j-1}t^{2l-i+1}$\\$+x^jt^{2l-i+2}+$\\$\cdots+x^{i-2}t^{2l-j}$\\$(2 \le j \le i-2)$\\ \\$t^{2l-i+1}$\\$+xt^{2l-i+2}+\cdots$\\$+x^{i-2}t^{2l-1}$   \end{tabular} \\  
\hline
\begin{tabular}{c} $-\{\beta \in \Delta_n^+ : \beta \le $\\$\phi_1+2\phi_2$\\$+\cdots+2\phi_l\},$\\$2l-1$ \end{tabular} & 
\begin{tabular}{c} $\{\nu, \phi_2,\ldots ,$\\$\phi_l\}; \nu=-(\phi_1+$\\$2\phi_2+\cdots+2\phi_l).$\\ See (e). \end{tabular} & 
\begin{tabular}{c} empty, $0$ \end{tabular} & 
$\{\nu \}$  & singleton & $t^{2l-1}$ \\ 
\hline
\end{tabular} 
\end{table}

\begin{figure}
\begin{center} 
\begin{tikzpicture}

\filldraw [black] (-15,0) circle [radius = 0.1]; 
\draw (-14,0) circle [radius = 0.1]; 
\draw (-12.5,0) circle [radius = 0.1]; 
\draw (-11,0) circle [radius = 0.1]; 
\draw (-10,0) circle [radius = 0.1]; 
\node [above] at (-15.05,0.05) {$\nu$}; 
\node [above] at (-14.05,0.05) {$\phi_2$}; 
\node [above] at (-12.45,0.05) {$\phi_i$}; 
\node [above] at (-10.75,0.05) {$\phi_{l-1}$}; 
\node [above] at (-9.95,0.05) {$\phi_l$}; 
\node [left] at (-15.5,0) {(a) $\frak{b}_l :$}; 
\draw (-14.9,0) -- (-14.1,0); 
\draw (-13.9,0) -- (-13.5,0);
\draw [dotted] (-13.5,0) -- (-13,0); 
\draw (-13,0) -- (-12.6,0); 
\draw (-12.4,0) -- (-12,0);
\draw [dotted] (-12,0) -- (-11.5,0); 
\draw (-11.5,0) -- (-11.1,0);
\draw (-10.1,0) -- (-10.2,0.1); 
\draw (-10.1,0) -- (-10.2,-0.1); 
\draw (-10.9,0.025) -- (-10.15,0.025); 
\draw (-10.9,-0.025) -- (-10.15,-0.025);

\filldraw [black] (-15,-1.5) circle [radius = 0.1]; 
\filldraw [black] (-14,-1.5) circle [radius = 0.1]; 
\draw (-13,-1.5) circle [radius = 0.1]; 
\draw (-11.5,-1.5) circle [radius = 0.1]; 
\draw (-10.5,-1.5) circle [radius = 0.1]; 
\node [above] at (-14.95,-1.45) {$\nu_1$}; 
\node [above] at (-13.95,-1.45) {$\nu_2$}; 
\node [above] at (-12.95,-1.45) {$\phi_3$}; 
\node [above] at (-11.25,-1.45) {$\phi_{l-1}$}; 
\node [above] at (-10.45,-1.45) {$\phi_l$}; 
\node [left] at (-15.5,-1.5) {(b) $\frak{b}_l :$}; 
\node [below] at (-12.5,-1.6) {$(i=1)$}; 
\draw (-14.9,-1.5) -- (-14.1,-1.5); 
\draw (-13.9,-1.5) -- (-13.1,-1.5); 
\draw (-12.9,-1.5) -- (-12.5,-1.5); 
\draw [dotted] (-12.5,-1.5) -- (-12,-1.5);
\draw (-12,-1.5) -- (-11.6,-1.5); 
\draw (-10.6,-1.5) -- (-10.7,-1.4); 
\draw (-10.6,-1.5) -- (-10.7,-1.6); 
\draw (-11.4,-1.475) -- (-10.65,-1.475); 
\draw (-11.4,-1.525) -- (-10.65,-1.525);

\draw (-8,-1.5) circle [radius = 0.1]; 
\draw (-6.5,-1.5) circle [radius = 0.1]; 
\filldraw [black] (-5.5,-1.5) circle [radius = 0.1]; 
\filldraw [black] (-4.5,-1.5) circle [radius = 0.1]; 
\draw (-3.5,-1.5) circle [radius = 0.1]; 
\draw (-2,-1.5) circle [radius = 0.1]; 
\draw (-1,-1.5) circle [radius = 0.1]; 
\node [above] at (-7.95,-1.45) {$\phi_2$}; 
\node [above] at (-6.45,-1.45) {$\phi_i$}; 
\node [above] at (-5.45,-1.45) {$\nu_1$}; 
\node [above] at (-4.45,-1.45) {$\nu_2$}; 
\node [above] at (-3.25,-1.45) {$\phi_{i+2}$}; 
\node [above] at (-1.75,-1.45) {$\phi_{l-1}$}; 
\node [above] at (-0.95,-1.45) {$\phi_l$}; 
\node [below] at (-4.5,-1.6) {$(2\le i \le l-2)$}; 
\draw (-7.9,-1.5) -- (-7.5,-1.5); 
\draw [dotted] (-7.5,-1.5) -- (-7,-1.5); 
\draw (-7,-1.5) -- (-6.6,-1.5); 
\draw (-6.4,-1.5) -- (-5.6,-1.5);
\draw (-5.4,-1.5) -- (-4.6,-1.5);
\draw (-4.4,-1.5) -- (-3.6,-1.5);
\draw (-3.4,-1.5) -- (-3,-1.5);
\draw [dotted] (-3,-1.5) -- (-2.5,-1.5);
\draw (-2.5,-1.5) -- (-2.1,-1.5);
\draw (-1.1,-1.5) -- (-1.2,-1.4); 
\draw (-1.1,-1.5) -- (-1.2,-1.6); 
\draw (-1.9,-1.475) -- (-1.15,-1.475); 
\draw (-1.9,-1.525) -- (-1.15,-1.525);

\draw (1.5,-1.5) circle [radius = 0.1]; 
\draw (3,-1.5) circle [radius = 0.1]; 
\filldraw [black] (4,-1.5) circle [radius = 0.1]; 
\filldraw [black] (5,-1.5) circle [radius = 0.1]; 
\node [above] at (1.55,-1.45) {$\phi_2$}; 
\node [above] at (3.25,-1.45) {$\phi_{l-1}$}; 
\node [above] at (4.05,-1.45) {$\nu_1$}; 
\node [above] at (5.05,-1.45) {$\nu_2$}; 
\node [below] at (3,-1.6) {$(i=l-1)$}; 
\draw (1.6,-1.5) -- (2,-1.5); 
\draw [dotted] (2,-1.5) -- (2.5,-1.5); 
\draw (2.5,-1.5) -- (2.9,-1.5); 
\draw (3.1,-1.5) -- (3.9,-1.5);
\draw (4.9,-1.5) -- (4.8,-1.4); 
\draw (4.9,-1.5) -- (4.8,-1.6); 
\draw (4.1,-1.475) -- (4.85,-1.475); 
\draw (4.1,-1.525) -- (4.85,-1.525);

\draw (-15,-3.5) circle [radius = 0.1]; 
\draw (-13.5,-3.5) circle [radius = 0.1]; 
\filldraw [black] (-12.5,-3.5) circle [radius = 0.1]; 
\filldraw [black] (-11.5,-3.5) circle [radius = 0.1]; 
\node [above] at (-14.95,-3.45) {$\phi_2$}; 
\node [above] at (-13.25,-3.45) {$\phi_{l-1}$}; 
\node [above] at (-12.45,-3.45) {$\nu_2$}; 
\node [above] at (-11.45,-3.45) {$\nu_1$}; 
\node [left] at (-15.5,-3.5) {(c) $\frak{b}_l :$}; 
\draw (-14.9,-3.5) -- (-14.5,-3.5); 
\draw [dotted] (-14.5,-3.5) -- (-14,-3.5); 
\draw (-14,-3.5) -- (-13.6,-3.5); 
\draw (-13.4,-3.5) -- (-12.6,-3.5);
\draw (-11.6,-3.5) -- (-11.7,-3.4); 
\draw (-11.6,-3.5) -- (-11.7,-3.6); 
\draw (-12.4,-3.475) -- (-11.65,-3.475); 
\draw (-12.4,-3.525) -- (-11.65,-3.525);

\draw (-8,-3.5) circle [radius = 0.1]; 
\draw (-6.5,-3.5) circle [radius = 0.1]; 
\filldraw [black] (-5.5,-3.5) circle [radius = 0.1]; 
\filldraw [black] (-4.5,-3.5) circle [radius = 0.1]; 
\draw (-3.5,-3.5) circle [radius = 0.1]; 
\draw (-2,-3.5) circle [radius = 0.1]; 
\draw (-1,-3.5) circle [radius = 0.1]; 
\node [above] at (-7.95,-3.45) {$\phi_2$}; 
\node [above] at (-6.25,-3.45) {$\phi_{i-2}$}; 
\node [above] at (-5.45,-3.45) {$\nu_2$}; 
\node [above] at (-4.45,-3.45) {$\nu_1$}; 
\node [above] at (-3.45,-3.45) {$\phi_i$}; 
\node [above] at (-1.75,-3.45) {$\phi_{l-1}$}; 
\node [above] at (-0.95,-3.45) {$\phi_l$}; 
\node [left] at (-8,-3.5) {(d) $\frak{b}_l :$}; 
\draw (-7.9,-3.5) -- (-7.5,-3.5); 
\draw [dotted] (-7.5,-3.5) -- (-7,-3.5); 
\draw (-7,-3.5) -- (-6.6,-3.5); 
\draw (-6.4,-3.5) -- (-5.6,-3.5);
\draw (-5.4,-3.5) -- (-4.6,-3.5);
\draw (-4.4,-3.5) -- (-3.6,-3.5);
\draw (-3.4,-3.5) -- (-3,-3.5);
\draw [dotted] (-3,-3.5) -- (-2.5,-3.5);
\draw (-2.5,-3.5) -- (-2.1,-3.5);
\draw (-1.1,-3.5) -- (-1.2,-3.4); 
\draw (-1.1,-3.5) -- (-1.2,-3.6); 
\draw (-1.9,-3.475) -- (-1.15,-3.475); 
\draw (-1.9,-3.525) -- (-1.15,-3.525);

\filldraw [black] (1.5,-3.5) circle [radius = 0.1]; 
\draw (2.5,-3.5) circle [radius = 0.1]; 
\draw (4,-3.5) circle [radius = 0.1]; 
\draw (5,-3.5) circle [radius = 0.1]; 
\node [above] at (1.55,-3.45) {$\nu$}; 
\node [above] at (2.55,-3.45) {$\phi_2$}; 
\node [above] at (4.25,-3.45) {$\phi_{l-1}$}; 
\node [above] at (5.05,-3.45) {$\phi_l$}; 
\node [left] at (1.5,-3.5) {(e) $\frak{b}_l :$}; 
\draw (1.6,-3.5) -- (2.4,-3.5); 
\draw (2.6,-3.5) -- (3,-3.5); 
\draw [dotted] (3,-3.5) -- (3.5,-3.5); 
\draw (3.5,-3.5) -- (3.9,-3.5); 
\draw (4.9,-3.5) -- (4.8,-3.4); 
\draw (4.9,-3.5) -- (4.8,-3.6); 
\draw (4.1,-3.475) -- (4.85,-3.475); 
\draw (4.1,-3.525) -- (4.85,-3.525);

\end{tikzpicture}
\end{center}
\end{figure}
\end{landscape}

\begin{landscape} 
\begin{table} 
\caption{Poincar\'{e} Polynomials of The Irreducible Unitary Representations $A_\frak{q}$ of $SO_0(2,2l-2)(l \ge 3):$}\label{d-table}
\begin{tabular}{||c|c|c|c|c|c||}
\hline
\begin{tabular}{c} $\Delta(\frak{u} \cap \frak{p}_-),$\\$R_-(\frak{q})$ \end{tabular} & $\Phi_\frak{q}$ & 
\begin{tabular}{c} $\Delta(\frak{u} \cap \frak{p}_+),$\\$R_+(\frak{q})$ \end{tabular} & $\Gamma$ & $Y_\frak{q}$ & $P_\frak{q}(x, t)$ \\
\hline
\hline
\begin{tabular}{c} empty, \\$0$ \end{tabular} & 
\begin{tabular}{c} $\{\nu, \phi_2, \ldots ,\phi_l\};$\\$\nu=\phi_1.$\\ See (a).  \end{tabular} & 
\begin{tabular}{c} $\{\beta \in \Delta_n^+ : \beta \ge $\\$\nu\},2l-2$\\ \\ 
$\{\beta \in \Delta_n^+ : \beta \ge $\\$\phi_1+\cdots +\phi_i\},2l-$\\$i-1(2 \le i \le l-2)$\\ \\$\{\beta \in \Delta_n^+ : \beta \ge $\\$\phi_1+\cdots +\phi_{l-2}+\phi_a\},$\\$l-1(a=l-1,l)$\\ \\
$\{\beta \in \Delta_n^+ : \beta \ge $\\$\xi_1 \textrm{ or } \xi_2\},l$\\ \\empty, $0$ \end{tabular} & 
\begin{tabular}{c} $\{\nu\}$\\ \\$\{\phi_i\}$\\$(2 \le $\\$i \le l-2)$\\ \\$\{\phi_a\}$\\$(a=l-1,l)$\\ \\$\{\phi_{l-1},\phi_l\}$\\ \\ empty \end{tabular} & 
\begin{tabular}{c} singleton \\ \\$\frac{SU(i)}{S(U(1)\times U(i-1))}$\\$(2 \le i \le l-2)$\\ \\$\frac{SU(l)}{S(U(1)\times U(l-1))}$\\ \\$\frac{SU(l-1)}{S(U(1)\times U(l-2))}$\\  \\
$\frac{SO(2l)}{SO(2)\times SO(2l-2)}$ \end{tabular} & 
\begin{tabular}{c} $x^{2l-2}$\\ \\$x^{2l-i-1}$\\$+x^{2l-i}t+$\\$\cdots+x^{2l-2}t^{i-1}$\\ \\$x^{l-1}+x^lt+$\\$\cdots+x^{2l-2}t^{l-1}$\\ \\ 
$x^l+x^{l+1}t+$\\$\cdots+x^{2l-2}t^{l-2}$\\ \\$1+xt+\cdots $\\$+2x^{l-1}t^{l-1}+$\\$\cdots+x^{2l-2}t^{2l-2}$  \end{tabular} \\  
\hline 
\begin{tabular}{c} $-\{\beta \in \Delta_n^+ : \beta \le $\\$\phi_1+\cdots +\phi_i\},$\\$i$\\$(1 \le i \le l-3)$\\$(l \ge 4)$ \end{tabular} & 
\begin{tabular}{c} $\{\nu_1,\nu_2,\phi_2,\ldots ,$\\$\phi_i,\phi_{i+2},\ldots , \phi_l\};\nu_1$\\$=-(\phi_1+\cdots +\phi_i),$\\$\nu_2=\phi_1+\cdots+\phi_{i+1}.$\\ See (b). \end{tabular} & 
\begin{tabular}{c} $\{\beta \in \Delta_n^+ : \beta \ge $\\$\nu_2\},2l-2-i$\\ \\$\{\beta \in \Delta_n^+ : \beta \ge $\\$\phi_1+\cdots +\phi_j\},2l-j$\\$-1(i+2 \le j \le l-2)$\\ \\
$\{\beta \in \Delta_n^+ : \beta \ge $\\$\phi_1+\cdots +\phi_{l-2}+\phi_a\},$\\$l-1(a=l-1,l)$\\ \\$\{\beta \in \Delta_n^+ : \beta \ge $\\$\xi_1 \textrm{ or } \xi_2\},l$ \end{tabular} & 
\begin{tabular}{c} $\{\nu_1,\nu_2\}$\\ \\$\{\nu_1,\phi_j\}$\\$(i+2\le $\\$ j \le l-2)$\\ \\$\{\nu_1,\phi_a\}$\\$(a=l-1,l)$\\ \\$\{\nu_1,\phi_{l-1},\phi_l\}$ \end{tabular} & 
\begin{tabular}{c} singleton \\ \\$\frac{SU(j-i)}{S(U(1)\times U(j-i-1))}$\\$(i+2 \le $\\$ j \le l-2)$\\ \\$\frac{SU(l-i)}{S(U(1)\times U(l-i-1))}$\\ \\$\frac{SU(l-1-i)}{S(U(1)\times U(l-2-i))}$ \end{tabular} & 
\begin{tabular}{c} $x^{2l-2-i}t^i$\\ \\$x^{2l-j-1}t^i$\\$+x^{2l-j}t^{i+1}+$\\$\cdots+x^{2l-2-i}t^{j-1}$\\ \\$x^{l-1}t^i+x^lt^{i+1}+$\\$\cdots+x^{2l-2-i}t^{l-1}$\\ \\ 
$x^lt^i+x^{l+1}t^{i+1}+$\\$\cdots+x^{2l-2-i}t^{l-2}$  \end{tabular} \\  
\hline 
\end{tabular} 
\end{table} 
\end{landscape}

\begin{landscape} 
\begin{table} 
\begin{tabular}{||c|c|c|c|c|c||}
\hline
\begin{tabular}{c} $\Delta(\frak{u} \cap \frak{p}_-),$\\$R_-(\frak{q})$ \end{tabular} & $\Phi_\frak{q}$ & 
\begin{tabular}{c} $\Delta(\frak{u} \cap \frak{p}_+),$\\$R_+(\frak{q})$ \end{tabular} & $\Gamma$ & $Y_\frak{q}$ & $P_\frak{q}(x,t)$ \\
\hline
\hline
,, & 
,, & 
\begin{tabular}{c} $\{\beta \in \Delta_n^+ : \beta \ge $\\$\phi_1+\cdots +\phi_i+$\\$2\phi_{i+1}+\cdots +2\phi_{l-2}$\\$+\phi_{l-1}+\phi_l\},i$ \end{tabular} & 
$\{\nu_1\}$  & 
$\frac{SO(2l-2i)}{SO(2)\times SO(2l-2i-2)}$ & 
\begin{tabular}{c} $x^it^i+x^{i+1}t^{i+1}$\\$+\cdots +2x^{l-1}t^{l-1}$\\$+\cdots+$\\$x^{2l-i-2}t^{2l-i-2}$  \end{tabular} \\  
\hline 
\begin{tabular}{c} $-\{\beta \in \Delta_n^+ : \beta \le $\\$\phi_1+\cdots +\phi_{l-2}\},$\\$l-2$ \end{tabular} & 
\begin{tabular}{c} $\{\nu_1,\nu_2,\nu'_2,$\\$\phi_2,\ldots ,$\\$\phi_{l-2}\};\nu_1=-$\\$(\phi_1+\cdots +\phi_{l-2}),$\\$\nu_2=\xi_1,\nu'_2=\xi_2.$\\ See (c). \end{tabular} & 
\begin{tabular}{c} $\{\beta \in \Delta_n^+ : \beta \ge $\\$\nu_2, \textrm{or }\nu'_2\},l$\\ \\$\{\beta \in \Delta_n^+ : \beta \ge \xi_i\},$\\$l-1(i =1,2)$\\ \\
$\{\beta \in \Delta_n^+ : \beta \ge \phi_1+$\\$\cdots+\phi_l\},l-2$ \end{tabular} & 
\begin{tabular}{c} $\{\nu_1,\nu_2,\nu'_2\}$\\ \\$\{\nu_1,\xi_i\}$\\$(i=1,2)$\\ \\$\{\nu_1\}$ \end{tabular} & 
\begin{tabular}{c} singleton \\ \\ $\frac{SU(2)}{S(U(1)\times U(1))}$\\ \\$\frac{SO(4)}{SO(2)\times SO(2)}$ \end{tabular} & 
\begin{tabular}{c} $x^lt^{l-2}$\\ \\$x^{l-1}t^{l-2}$\\$+x^lt^{l-1}$\\ \\$x^{l-2}t^{l-2}+$\\$2x^{l-1}t^{l-1}$\\$+x^lt^l$   \end{tabular} \\  
\hline
\begin{tabular}{c} $-\{\beta \in \Delta_n^+ : \beta \le $\\$\phi_1+\cdots +\phi_{l-2}$\\$+\phi_a\},l-1$\\$(a=l-1,l)$ \end{tabular} & 
\begin{tabular}{c} $\{\nu_1,\nu_2,\phi_2,$\\$\ldots ,\phi_{l-2},\phi_a\};\nu_1$\\$=-(\phi_1+\cdots +$\\$\phi_{l-2}+\phi_a),$\\$\nu_2=\phi_1$\\$+\cdots +\phi_{l-2}+\phi_b$\\
$(b\in\{l-1,l\}$\\$\setminus \{a\}).$\\ See (d). \end{tabular} & 
\begin{tabular}{c} $\{\beta \in \Delta_n^+ : \beta \ge \nu_2\},$\\$l-1$\\ \\$\{\beta \in \Delta_n^+ : \beta \ge$\\$\phi_1+\cdots+\phi_l\},l-2$\\ \\
$\{\beta \in \Delta_n^+ : \beta \ge \phi_1+$\\$\cdots +\phi_{j-1}+2\phi_j+\cdots $\\$+2\phi_{l-2}+\phi_{l-1}+\phi_l\},$\\$j-1$\\$(2\le j\le l-2, l\ge 4)$\\ \\empty, $0$ \end{tabular} & 
\begin{tabular}{c} $\{\nu_1,\nu_2\}$\\ \\$\{\nu_1,\phi_a\}$\\ \\$\{\nu_1,\phi_j\}$\\$(2\le j\le l-2,$\\$ l\ge 4)$\\ \\$\{\nu_1\}$ \end{tabular} & 
\begin{tabular}{c} singleton \\ \\ $\frac{SU(2)}{S(U(1)\times U(1))}$\\ \\$\frac{SU(l-j+1)}{S(U(1)\times U(l-j))}$\\$(2\le j\le l-2,$\\$ l\ge 4)$\\ \\$\frac{SU(l)}{S(U(1)\times U(l-1))}$  \end{tabular} & 
\begin{tabular}{c} $x^{l-1}t^{l-1}$\\ \\$x^{l-2}t^{l-1}$\\$+x^{l-1}t^l$\\ \\$x^{j-1}t^{l-1}+$\\$x^jt^l+\cdots+$\\$x^{l-1}t^{2l-j-1}$\\ \\$t^{l-1}+$\\$xt^l+\cdots$\\$+x^{l-1}t^{2l-2}$   \end{tabular} \\  
\hline
\end{tabular} 
\end{table} 
\end{landscape}

\begin{landscape} 
\begin{table} 
\begin{tabular}{||c|c|c|c|c|c||}
\hline
\begin{tabular}{c} $\Delta(\frak{u} \cap \frak{p}_-),$\\$R_-(\frak{q})$ \end{tabular} & $\Phi_\frak{q}$ & 
\begin{tabular}{c} $\Delta(\frak{u} \cap \frak{p}_+),$\\$R_+(\frak{q})$ \end{tabular} & $\Gamma$ & $Y_\frak{q}$ & $P_\frak{q}(x,t)$ \\
\hline
\hline
\begin{tabular}{c} $-\{\beta \in \Delta_n^+ : \beta \le $\\$\xi_1,\textrm{or }\xi_2\},$\\$l$ \end{tabular} & 
\begin{tabular}{c} $\{\nu_1,\nu'_1,\nu_2,\phi_2,$\\$\ldots ,\phi_{l-2},\};\nu_1$\\$=-\xi_1,\nu'_1=$\\$-\xi_2,\nu_2=$\\$\phi_1+\cdots +\phi_l.$\\ See (e). \end{tabular} & 
\begin{tabular}{c} $\{\beta \in \Delta_n^+ : \beta \ge \nu_2\},$\\$l-2$\\ \\
$\{\beta \in \Delta_n^+ : \beta \ge \phi_1+$\\$\cdots +\phi_{j-1}+2\phi_j+\cdots $\\$+2\phi_{l-2}+\phi_{l-1}+\phi_l\},$\\$j-1$\\$(2\le j\le l-2, l\ge 4)$\\ \\empty, $0$ \end{tabular} & 
\begin{tabular}{c} $\{\nu_1,\nu'_1,\nu_2\}$\\ \\$\{\nu_1,\nu'_1,\phi_j\}$\\$(2\le j\le $\\$l-2,$\\$ l\ge 4)$\\ \\$\{\nu_1,\nu'_1\}$ \end{tabular} & 
\begin{tabular}{c} singleton \\ \\$\frac{SU(l-j)}{S(U(1)\times U(l-j-1))}$\\$(2\le j\le l-2,$\\$ l\ge 4)$\\ \\$\frac{SU(l-1)}{S(U(1)\times U(l-2))}$  \end{tabular} & 
\begin{tabular}{c} $x^{l-2}t^l$\\ \\$x^{j-1}t^l+$\\$x^jt^{l+1}+\cdots+$\\$x^{l-2}t^{2l-j-1}$\\ \\$t^l+$\\$xt^{l+1}+\cdots$\\$+x^{l-2}t^{2l-2}$   \end{tabular} \\  
\hline 
 \begin{tabular}{c} $-\{\beta \in \Delta_n^+ : \beta \le $\\$\phi_1+\cdots+\phi_l\},$\\$l+1$\\$(l \ge 4)$ \end{tabular} & 
\begin{tabular}{c} $\{\nu_1,\nu_2,\phi_2,$\\$\ldots ,\phi_{l-3},\phi_{l-1}, \phi_l\};$\\$\nu_1=-(\phi_1+\cdots+\phi_l),$\\$\nu_2=\phi_1+\cdots +\phi_{l-3}$\\$+2\phi_{l-2}+\phi_{l-1}+\phi_l.$\\ See (f). \end{tabular} & 
\begin{tabular}{c} $\{\beta \in \Delta_n^+ : \beta \ge \nu_2\},$\\$l-3$\\ \\
$\{\beta \in \Delta_n^+ : \beta \ge \phi_1+$\\$\cdots +\phi_{j-1}+2\phi_j+\cdots $\\$+2\phi_{l-2}+\phi_{l-1}+\phi_l\},$\\$j-1$\\$(2\le j\le l-3)$\\ \\empty, $0$ \end{tabular} & 
\begin{tabular}{c} $\{\nu_1,\nu_2\}$\\ \\$\{\nu_1,\phi_j\}$\\$(2\le j\le $\\$l-3)$\\ \\$\{\nu_1\}$ \end{tabular} & 
\begin{tabular}{c} singleton \\ \\$\frac{SU(l-j-1)}{S(U(1)\times U(l-j-2))}$\\$(2\le j\le l-3)$\\ \\$\frac{SU(l-2)}{S(U(1)\times U(l-3))}$  \end{tabular} & 
\begin{tabular}{c} $x^{l-3}t^{l+1}$\\ \\$x^{j-1}t^{l+1}+$\\$x^jt^{l+2}+\cdots+$\\$x^{l-3}t^{2l-j-1}$\\ \\$t^{l+1}+$\\$xt^{l+2}+\cdots$\\$+x^{l-3}t^{2l-2}$   \end{tabular} \\  
\hline 
\begin{tabular}{c} $-\{\beta \in \Delta_n^+ : \beta \le $\\$\phi_1+\cdots +\phi_{i-1}$\\$+2\phi_i+\cdots+2\phi_{l-2}$\\$+\phi_{l-1}+\phi_l\},$\\$2l-i$\\$(3\le i \le l-2)$\\$(l \ge 4)$ \end{tabular} & 
\begin{tabular}{c} $\{\nu_1,\nu_2,\phi_2, \ldots,$\\$\phi_{i-2},\phi_i,\ldots ,\phi_l\};$\\
$\nu_1=-(\phi_1+\cdots +\phi_{i-1}$\\$+2\phi_i+\cdots+2\phi_{l-2}$\\$+\phi_{l-1}+\phi_l),\nu_2$\\$= \phi_1+\cdots+\phi_{i-2}$\\$+2\phi_{i-1}+\cdots+$\\$2\phi_{l-2}+\phi_{l-1}+\phi_l.$\\ See (g). \end{tabular} & 
\begin{tabular}{c} $\{\beta \in \Delta_n^+ : \beta \ge $\\$\nu_2\},i-2$\\ \\
$\{\beta \in \Delta_n^+ : \beta \ge $\\$\phi_1+\cdots +\phi_{j-1}$\\$+2\phi_j+\cdots+2\phi_l\},$\\$j-1$\\$(2 \le j \le i-2)$\\ \\ empty, $0$ \end{tabular} & 
\begin{tabular}{c} $\{\nu_1,\nu_2\}$\\ \\$\{\nu_1,\phi_j\}$\\$(2\le j \le $\\$ i-2)$\\ \\$\{\nu_1\}$ \end{tabular} & 
\begin{tabular}{c} singleton \\ \\$\frac{SU(i-j)}{S(U(1)\times U(i-j-1))}$\\$(2 \le j \le i-2)$\\ \\$\frac{SU(i-1)}{S(U(1)\times U(i-2))}$  \end{tabular} & 
\begin{tabular}{c} $x^{i-2}t^{2l-i}$\\ \\$x^{j-1}t^{2l-i}+$\\$x^jt^{2l-i+1}+\cdots$\\$+x^{i-2}t^{2l-j-1}$\\ \\$t^{2l-i}+$\\$xt^{2l-i+1}+\cdots$\\$+x^{i-2}t^{2l-2}$   \end{tabular} \\   
\hline
\begin{tabular}{c} $-\{\beta \in \Delta_n^+ : \beta \le $\\$\phi_1+2\phi_2+\cdots+$\\$2\phi_{l-2}+\phi_{l-1}+\phi_l\},$\\$2l-2$ \end{tabular} & 
\begin{tabular}{c} $\{\nu, \phi_2,\ldots ,\phi_l\}; \nu=$\\$-(\phi_1+2\phi_2+\cdots+$\\$2\phi_{l-2}+\phi_{l-1}+\phi_l).$\\ See (h). \end{tabular} & 
\begin{tabular}{c} empty, $0$ \end{tabular} & 
$\{\nu \}$  & singleton & $t^{2l-2}$  \\ 
\hline 
\end{tabular} 
\end{table}
\end{landscape}

\begin{landscape}
\begin{center} 
\begin{tikzpicture} 

\filldraw [black] (0,0) circle [radius = 0.1]; 
\draw (1,0) circle [radius = 0.1]; 
\draw (2.5,0) circle [radius = 0.1]; 
\draw (4,0) circle [radius = 0.1]; 
\draw (4.9,0.5) circle [radius = 0.1]; 
\draw (4.9,-0.5) circle [radius = 0.1]; 
\node [above] at (0.05,0.05) {$\nu$}; 
\node [above] at (1.05,0.05) {$\phi_2$}; 
\node [above] at (2.55,0.05) {$\phi_i$};  
\node [above] at (3.85,0.05) {$\phi_{l-2}$}; 
\node [above] at (4.95,0.55) {$\phi_{l-1}$}; 
\node [below] at (4.95,-0.55) {$\phi_l$}; 
\node [left] at (-0.4,0) {(a)  $\frak{\delta}_l :$}; 
\node [left] at (-0.5,-0.5) {$(l \ge 3)$}; 
\draw (0.1,0) -- (0.9,0); 
\draw (1.1,0) -- (1.5,0); 
\draw [dotted] (1.5,0) -- (2,0); 
\draw (2,0) -- (2.4,0); 
\draw (2.6,0) -- (3,0); 
\draw [dotted] (3,0) -- (3.5,0); 
\draw (3.5,0) -- (3.9,0); 
\draw (4.1,0) -- (4.85,0.45); 
\draw (4.1,0) -- (4.85,-0.45);

\draw (8,0) circle [radius = 0.1]; 
\draw (9.5,0) circle [radius = 0.1]; 
\filldraw [black] (10.5,0) circle [radius = 0.1]; 
\filldraw [black] (11.5,0) circle [radius = 0.1]; 
\draw (12.5,0) circle [radius = 0.1]; 
\draw (14,0) circle [radius = 0.1]; 
\draw (14.9,0.5) circle [radius = 0.1]; 
\draw (14.9,-0.5) circle [radius = 0.1]; 
\node [above] at (8.05,0.05) {$\phi_2$}; 
\node [above] at (9.55,0.05) {$\phi_i$}; 
\node [above] at (10.55,0.05) {$\nu_1$};  
\node [above] at (11.55,0.05) {$\nu_2$}; 
\node [above] at (12.65,0.05) {$\phi_{i+2}$}; 
\node [above] at (13.85,0.05) {$\phi_{l-2}$}; 
\node [above] at (14.95,0.55) {$\phi_{l-1}$}; 
\node [below] at (14.95,-0.55) {$\phi_l$}; 
\node [left] at (7.6,0) {(b)  $\frak{\delta}_l :$}; 
\node [left] at (7.5,-0.5) {$(l \ge 4)$}; 
\draw (8.1,0) -- (8.5,0); 
\draw [dotted] (8.5,0) -- (9,0); 
\draw (9,0) -- (9.4,0); 
\draw (9.6,0) -- (10.4,0); 
\draw (10.6,0) -- (11.4,0); 
\draw (11.6,0) -- (12.4,0); 
\draw (12.6,0) -- (13,0); 
\draw [dotted] (13,0) -- (13.5,0); 
\draw (13.5,0) -- (13.9,0); 
\draw (14.1,0) -- (14.85,0.45); 
\draw (14.1,0) -- (14.85,-0.45);

\draw (17,0) circle [radius = 0.1]; 
\draw (18.5,0) circle [radius = 0.1]; 
\filldraw [black] (19.5,0) circle [radius = 0.1]; 
\filldraw [black] (20.4,0.5) circle [radius = 0.1]; 
\filldraw [black] (20.4,-0.5) circle [radius = 0.1]; 
\node [above] at (17.05,0.05) {$\phi_2$}; 
\node [above] at (18.65,0.05) {$\phi_{l-2}$};  
\node [above] at (19.45,0.05) {$\nu_1$}; 
\node [above] at (20.45,0.55) {$\nu_2$}; 
\node [below] at (20.45,-0.55) {$\nu'_2$}; 
\node [left] at (16.7,0) {(c)  $\frak{\delta}_l :$}; 
\node [left] at (16.8,-0.5) {$(l \ge 3)$}; 
\draw (17.1,0) -- (17.5,0); 
\draw [dotted] (17.5,0) -- (18,0); 
\draw (18,0) -- (18.4,0); 
\draw (18.6,0) -- (19.4,0); 
\draw (19.6,0) -- (20.35,0.45); 
\draw (19.6,0) -- (20.35,-0.45);

\draw (0,-2) circle [radius = 0.1]; 
\draw (1.5,-2) circle [radius = 0.1]; 
\draw (2.5,-2) circle [radius = 0.1]; 
\filldraw [black] (3.4,-1.5) circle [radius = 0.1]; 
\filldraw [black] (3.4,-2.5) circle [radius = 0.1]; 
\node [above] at (0.05,-1.95) {$\phi_2$}; 
\node [above] at (1.65,-1.95) {$\phi_{l-2}$};  
\node [above] at (2.45,-1.95) {$\phi_a$}; 
\node [above] at (3.45,-1.45) {$\nu_2$}; 
\node [below] at (3.45,-2.55) {$\nu_1$}; 
\node [left] at (-0.4,-2) {(d)  $\frak{\delta}_l :$}; 
\node [left] at (-0.5,-2.5) {$(l \ge 3)$}; 
\draw (0.1,-2) -- (0.5,-2); 
\draw [dotted] (0.5,-2) -- (1,-2); 
\draw (1,-2) -- (1.4,-2); 
\draw (1.6,-2) -- (2.4,-2); 
\draw (2.6,-2) -- (3.35,-1.55); 
\draw (2.6,-2) -- (3.35,-2.45);

\draw (6.5,-2) circle [radius = 0.1]; 
\draw (8,-2) circle [radius = 0.1]; 
\filldraw [black] (9,-2) circle [radius = 0.1]; 
\filldraw [black] (9.9,-1.5) circle [radius = 0.1]; 
\filldraw [black] (9.9,-2.5) circle [radius = 0.1]; 
\node [above] at (6.55,-1.95) {$\phi_2$}; 
\node [above] at (8.15,-1.95) {$\phi_{l-2}$};  
\node [above] at (9.05,-1.95) {$\nu_2$}; 
\node [above] at (9.95,-1.45) {$\nu_1$}; 
\node [below] at (9.95,-2.55) {$\nu'_1$}; 
\node [left] at (6.1,-2) {(e)  $\frak{\delta}_l :$}; 
\node [left] at (6,-2.5) {$(l \ge 3)$}; 
\draw (6.6,-2) -- (7,-2); 
\draw [dotted] (7,-2) -- (7.5,-2); 
\draw (7.5,-2) -- (7.9,-2); 
\draw (8.1,-2) -- (8.9,-2); 
\draw (9.1,-2) -- (9.85,-1.55); 
\draw (9.1,-2) -- (9.85,-2.45);

\draw (13,-2) circle [radius = 0.1]; 
\draw (14.5,-2) circle [radius = 0.1]; 
\filldraw [black] (15.5,-2) circle [radius = 0.1]; 
\filldraw [black] (16.5,-2) circle [radius = 0.1]; 
\draw (17.4,-1.5) circle [radius = 0.1]; 
\draw (17.4,-2.5) circle [radius = 0.1]; 
\node [above] at (13.05,-1.95) {$\phi_2$}; 
\node [above] at (14.65,-1.95) {$\phi_{l-3}$}; 
\node [above] at (15.55,-1.95) {$\nu_2$};  
\node [above] at (16.55,-1.95) {$\nu_1$}; 
\node [above] at (17.35,-1.45) {$\phi_{l-1}$}; 
\node [below] at (17.35,-2.55) {$\phi_l$}; 
\node [left] at (12.6,-2) {(f)  $\frak{\delta}_l :$}; 
\node [left] at (12.5,-2.5) {$(l \ge 4)$}; 
\draw (13.1,-2) -- (13.5,-2); 
\draw [dotted] (13.5,-2) -- (14,-2); 
\draw (14,-2) -- (14.4,-2); 
\draw (14.6,-2) -- (15.4,-2); 
\draw (15.6,-2) -- (16.4,-2); 
\draw (16.6,-2) -- (17.35,-1.55); 
\draw (16.6,-2) -- (17.35,-2.45);

\draw (0,-4) circle [radius = 0.1]; 
\draw (1.5,-4) circle [radius = 0.1]; 
\filldraw [black] (2.5,-4) circle [radius = 0.1]; 
\filldraw [black] (3.5,-4) circle [radius = 0.1]; 
\draw (4.5,-4) circle [radius = 0.1]; 
\draw (6,-4) circle [radius = 0.1]; 
\draw (6.9,-3.5) circle [radius = 0.1]; 
\draw (6.9,-4.5) circle [radius = 0.1]; 
\node [above] at (0.05,-3.95) {$\phi_2$}; 
\node [above] at (1.65,-3.95) {$\phi_{i-2}$}; 
\node [above] at (2.55,-3.95) {$\nu_2$}; 
\node [above] at (3.55,-3.95) {$\nu_1$}; 
\node [above] at (4.55,-3.95) {$\phi_i$};  
\node [above] at (5.85,-3.95) {$\phi_{l-2}$}; 
\node [above] at (6.95,-3.45) {$\phi_{l-1}$}; 
\node [below] at (6.95,-4.55) {$\phi_l$}; 
\node [left] at (-0.4,-4) {(g)  $\frak{\delta}_l :$}; 
\node [left] at (-0.5,-4.5) {$(l \ge 4)$}; 
\draw (0.1,-4) -- (0.5,-4); 
\draw [dotted] (0.5,-4) -- (1,-4); 
\draw (1,-4) -- (1.4,-4); 
\draw (1.6,-4) -- (2.4,-4); 
\draw (2.6,-4) -- (3.4,-4); 
\draw (3.6,-4) -- (4.4,-4); 
\draw (4.6,-4) -- (5,-4); 
\draw [dotted] (5,-4) -- (5.5,-4); 
\draw (5.5,-4) -- (5.9,-4); 
\draw (6.1,-4) -- (6.85,-3.55); 
\draw (6.1,-4) -- (6.85,-4.45);

\filldraw [black] (10,-4) circle [radius = 0.1]; 
\draw (11,-4) circle [radius = 0.1]; 
\draw (12.5,-4) circle [radius = 0.1]; 
\draw (13.4,-3.5) circle [radius = 0.1]; 
\draw (13.4,-4.5) circle [radius = 0.1]; 
\node [above] at (10.05,-3.95) {$\nu$}; 
\node [above] at (11.05,-3.95) {$\phi_2$}; 
\node [above] at (12.35,-3.95) {$\phi_{l-2}$}; 
\node [above] at (13.45,-3.45) {$\phi_{l-1}$}; 
\node [below] at (13.45,-4.55) {$\phi_l$}; 
\node [left] at (9.6,-4) {(h)  $\frak{\delta}_l :$}; 
\node [left] at (9.5,-4.5) {$(l \ge 3)$}; 
\draw (10.1,-4) -- (10.9,-4); 
\draw (11.1,-4) -- (11.5,-4); 
\draw [dotted] (11.5,-4) -- (12,-4); 
\draw (12,-4) -- (12.4,-4); 
\draw (12.6,-4) -- (13.35,-3.55); 
\draw (12.6,-4) -- (13.35,-4.45);

\end{tikzpicture} 
\end{center}

\subsection{Poincar\'{e} polynomials of cohomologies of the irreducible unitary representations $A_\frak{q}$ of $SO_0(2,2)$} Let $\frak{g}=\frak{\delta}_2.$ \\ 
Note that $\Delta_n^+=\{\phi_1,\phi_2\}.$
\begin{center}
\begin{tikzpicture}
\filldraw [black] (0,0) circle [radius = 0.1]; 
\filldraw [black] (1,0) circle [radius = 0.1]; 
\node [above] at (0.05,0) {$\phi_1$}; 
\node [above] at (1.05,0) {$\phi_2$}; 
\node [left] at (-0.4,0) {$\frak{\delta}_2 :$}; 
\end{tikzpicture}
\end{center}
Now $\Delta(\frak{u}\cap\frak{p}_-)=\phi, \Delta(\frak{u}\cap \frak{p}_+)=\phi \implies Y_\frak{q}=\frac{SO(4)}{SO(2)\times SO(2)}, P_\frak{q}(x,t)=1+2xt+x^2t^2;$\\ 
$\Delta(\frak{u}\cap\frak{p}_-)=\phi, \Delta(\frak{u}\cap \frak{p}_+)=\{\phi_i\}(i=1,2) \implies Y_\frak{q}=\frac{SU(2)}{S(U(1)\times U(1))}, P_\frak{q}(x,t)=x+x^2t;$\\
$\Delta(\frak{u}\cap\frak{p}_-)=\phi, \Delta(\frak{u}\cap \frak{p}_+)=\{\phi_1,\phi_2\} \implies Y_\frak{q}=$ singleton, $P_\frak{q}(x,t)=x^2;$\\
$\Delta(\frak{u}\cap\frak{p}_-)=\{-\phi_i\}(i=1,2), \Delta(\frak{u}\cap \frak{p}_+)=\phi \implies Y_\frak{q}=\frac{SU(2)}{S(U(1)\times U(1))}, P_\frak{q}(x,t)=t+xt^2;$\\
$\Delta(\frak{u}\cap\frak{p}_-)=\{-\phi_i\}(i=1,2), \Delta(\frak{u}\cap \frak{p}_+)=\{\phi_j\}(j=1,2,j\neq i) \implies Y_\frak{q}=$ singleton, $P_\frak{q}(x,t)=xt;$  \\
$\Delta(\frak{u}\cap\frak{p}_-)=\{-\phi_1,-\phi_2\}, \Delta(\frak{u}\cap \frak{p}_+)=\phi \implies Y_\frak{q}=$ singleton, $P_\frak{q}(x,t)=t^2.$  \\

\end{landscape}

\section*{acknowledgement}

Both authors acknowledge the financial support from the Department of Science and Technology (DST), Govt. of India under the Scheme 
"Fund for Improvement of S\&T Infrastructure (FIST)" [File No. SR/FST/MS-I/2019/41]. 
Ankita Pal acknowledges the financial support from Council of Scientific and Industrial Research (CSIR) [File No. 08/155(0091)/2021-EMR-I].

\end{document}